\DeclareMathOperator{\aut}{Aut}
\DeclareMathOperator{\diag}{diag}
\DeclareMathOperator{\spn}{span}
\DeclareMathOperator{\img}{range}
\DeclareMathOperator{\rank}{rank}
\newcommand{\real}{\mathbb{R}}
\newcommand{\lap}{L}
\newcommand{\wlap}{\widetilde{\lap}}
\newcommand{\vv}{x}
\newcommand{\ev}{e}
\newcommand{\bv}{b}
\newcommand{\ez}{y}
\newcommand{\ones}{e}
\newcommand{\ptw}{\pi^*}
\newcommand{\zeros}{0}
\newcommand{\wgph}{\widetilde{G}}
\newcommand{\dotprod}[1]{\left\langle #1 \right\rangle}
\newtheorem{theorem}{Theorem}[section]
\newtheorem{proposition}{Proposition}[section]
\newtheorem{lemma}{Lemma}[section]
\theoremstyle{definition}
\newtheorem{definition}{Definition}[section]
\newtheorem{example}{Example}[section]
\newtheorem{remark}{Remark}[section]
\begin{document}

\title{Strongly uncontrollable network topologies}
\author{Cesar~O.~Aguilar%
\thanks{C.O.\ Aguilar is with the Department of Mathematics, State University of New York, Geneseo, NY (aguilar@geneseo.edu)}}

\maketitle


\begin{abstract}
In this paper, we present a class of network topologies under which the Laplacian consensus dynamics exhibits undesirable controllability properties under a broadcast control signal.  Specifically, the networks we characterize are uncontrollable for any subset of the nodes chosen as control inputs and that emit a common control signal.  We provide a sufficient condition for a network to contain this strong uncontrollability property and describe network perturbations that leave the uncontrollability property invariant.  As a by-product, we identify non-trivial network topologies that require the control of approximately half the nodes in the network as a necessary condition for controllability.
\end{abstract}

\begin{IEEEkeywords}
multi-agent systems, controllability, Laplacian matrix, consensus dynamics, graph theory
\end{IEEEkeywords}

%

\section{Introduction}
%
%
%
%

\IEEEPARstart{C}{ontrollability} of networked multi-agent systems is an ongoing topic of research in the control systems community due to the proliferation of technologies associated with large-scale network models, see for instance \cite{Tanner:04-cdc, AR-MJ-MM-ME:09, SM-ME-AB:10, GP-GN:12, AC-MN-MM:14,SZ-MC-MKC:14,NM-SZ-KC:14,AC-MM:14,CA-BG:14a,CA-BG:17,AY-WA-ME:16,SH:17,YG-LW:18} and references therein.  A primary goal of the ongoing research has been to identify graph-theoretic structures that are possible obstructions to controlling a networked control system.  To that end, the lack of controllability has been primarily attributed to the existence of so-called \textit{equitable partitions} of the vertex set \cite{AR-MJ-MM-ME:09,SM-ME-AB:10, AC-MM:14, SZ-MC-MKC:14,CA-BG:17}.  We note that a special case of these equitable partitions is the presence of structural symmetries in the associated network model \cite{AR-MJ-MM-ME:09}.  Roughly speaking, the existence of an equitable partition of the nodes of a network induce an invariant subspace for the uncontrolled dynamics and thus if the control nodes are chosen to preserve the invariant subspace then uncontrollability ensues.  A closely related line of research is the so-called \textit{minimal controllability problem} which is concerned with the scenario of controlling a large-scale multi-agent system with the fewest possible number of control nodes \cite{AO:14, AO:15, SP-SK-PA:15, SP-SK-PA:16,SP-GR-SK-PA-JR:17}.  Although it has been shown that solving minimal controllability problems is computationally intractable for generic systems (unless $P=NP$), heuristic algorithms are known that produce approximate solutions \cite{AO:14, SP-SK-PA:15, SP-SK-PA:16,SP-GR-SK-PA-JR:17}.  On the other hand, in the case of structured systems, the minimal controllability problem can be solved in polynomial time \cite{CC-JD:13,AO:15}.  

In this paper, we are motivated by the following question:  What structural properties present in a network result in uncontrollable dynamics for \textit{any} choice of control nodes emitting a common control signal?  Specifically, we focus on networked multi-agent systems undergoing the Laplacian consensus dynamics and describe network topologies that are uncontrollable under a  broadcast control for any choice of leader nodes.  Such networks were introduced in \cite{CA-BG:14a} and were called \textit{strongly uncontrollable graphs}.  To be more precise about the issue at hand, let $G$ be a simple $n$-vertex graph with Laplacian matrix $L=L(G)=A(G)-D(G)$, where $A(G)$ is the adjacency matrix and $D(G)$ is the diagonal degree matrix of $G$, and let $b$ be a binary vector.  A trivial necessary condition for the pair $(L, b)$ to be controllable is that the eigenvalues of $L$ are all distinct \cite[pg. 95]{ES:98}.  The authors in \cite{CA-BG:14a}, however, provide examples of graphs for which $L$ has \textit{distinct} eigenvalues but $(L,b)$ is uncontrollable for \textit{every} choice of binary vector $b$ and such graphs were called strongly uncontrollable graphs\footnote{In \cite{CA-BG:14a} (Theorem 4.1), there is also a construction of a class of graphs such that $(L,b)$ is uncontrollable for every choice of binary vector $b$ but such graphs contain \textit{repeated} eigenvalues.}.  A similar definition of \textit{strong uncontrollability} can be made using the adjacency matrix $A(G)$, the signless Laplacian matrix $Q(G)=A(G)+D(G)$, or some other graph matrix relevant to the network model in consideration.  In the case of the Laplacian matrix, we have performed an exhaustive numerical search revealing that strongly uncontrollable graphs do not appear until the number of nodes is $n=8$, that is, no connected graph is strongly uncontrollable for $n\leq 7$ in the case of $L$.  Our numerical search produced an enumeration of strongly uncontrollable graphs for $8\leq n\leq 12$ and the results are shown in Table~\ref{tab:strongly uncontrollable-graphs}.  Interestingly, for the case of the adjacency matrix, our numerical search revealed that no connected graph for $n\leq 10$ is strongly uncontrollable and, based on our results in this paper, we conjecture that strongly uncontrollable graphs using $A(G)$ do not exist.  

\begin{table}[h!]
\centering
\begin{tabular}{cccccc}\hline
$n$ & 8&9&10&11&12 \\ \hline
$s(n)$ &10&12&91&232&1749 \\ \hline
\end{tabular}
\caption{$s(n)$ is the number of connected graphs on $n$ vertices such that $L=L(G)$ has distinct eigenvalues and $(L,b)$ is uncontrollable for \textit{every} $b\in \{0,1\}^n$}\label{tab:strongly uncontrollable-graphs}
\end{table}


The main contribution of this paper is the characterization of a class of network topologies that result in a strongly uncontrollable multi-agent system undergoing the Laplacian consensus dynamics.  The identification of these topologies could provide a test bed to narrow the gap between known sufficient conditions and necessary conditions for network controllability.  As a by-product of our results, we identify a class of non-trivial network topologies that require the control of approximately half of the nodes for any chance of controllability.  The topologies we study are of interest since they contain many ``local symmetries'' that are similar to the symmetries found in large-scale real-world complex networks \cite{BM-RS-JA:08}.  The discovered network topologies contain two main structural ingredients, namely, the presence of many so-called \textit{twin} nodes and certain equitable partitions.  Twin vertices induce so-called Faria eigenvectors \cite{TB-JL-PS:07} of the Laplacian matrix $L$ and, through the use of the classical Popov-Belevitch-Hautus (PBH) test for controllability, result in the uncontrollability of $(L,b)$ for many choices of $b$ but in general are not enough for strong uncontrollability.  The second main ingredient is the presence of certain equitable partitions \cite{DC-CD-PR:07} which, together with the presence of twin nodes, result in a strongly uncontrollable graph.  Graph vertex partitions are becoming a standard tool used to study control-theoretic properties in multi-agent systems, see for instance \cite{AR-MJ-MM-ME:09,SM-ME-AB:10,SZ-MC-MKC:14,NM-HL-KC:14,NM-SZ-KC:15} and references therein.  Graph vertex partitions also take an important role in the study of synchrony and pattern formation in coupled cell networks \cite{IS-MG-MP:03,MG-IS-AT:05}.  In addition to our main result, and motivated by the recent research activity with structural controllability, we identify graph perturbations which preserve the strong uncontrollability property.

This paper is organized as follows.  In Section~\ref{sec:prelim} we introduce our notation, review the connection between equitable partitions of a graph and controllability, and present some technical results.  In Section~\ref{sec:twins}, we introduce twin graphs which are non-trivial network topologies requiring that approximately half of the nodes be controlled.  In Section~\ref{sec:main} we present our main result of the paper, namely, a sufficient condition for a networked multi-agent system undergoing the Laplacian consensus dynamics to be uncontrollable for any choice of leader nodes under a broadcast control signal.  In Section~\ref{sec:perturbations}, we identify three vertex addition operations that leave the strong uncontrollability property invariant.  We end the paper with a Conclusion.

\section{Preliminaries}\label{sec:prelim}
In this section, we introduce our notation, review the notion of equitable partitions and their role in network controllability, and present some technical results.

\subsection{Notation}
The all ones vector in $\real^n$ is denoted by $e=(1,1,\ldots,1)$ and the context will make it clear the value of $n$.  For $u\in \{1,2,\ldots,n\}$ we denote by $e_u$ the unit standard basis vector in $\real^n$ with non-zero entry at $u$.  We say that a square matrix $M$ has \textit{simple spectrum} if every eigenvalue of $M$ has algebraic multiplicity one.  The column/range space of $M$ will be denoted by $\img(M)$.  We equip $\real^n$ with the standard Euclidean inner product $\langle x,y\rangle=x^Ty$ and denote by $\Omega^\perp=\{x\in\real^n\;|\; \langle x,y\rangle=0,\; \forall\; y\in\Omega\}$ the orthogonal complement of a set $\Omega\subseteq\real^n$.  A subspace $W\subset\real^n$ is said to be \textit{$M$-invariant} if $w\in W$ implies that $Mw \in W$.  For each positive integer $n$, we let $\{0,1\}^n$ denote the set of binary vectors of length $n$.  Finally, we let $\mathbb{N}_0=\{0,1,2,\ldots\}$.

By a \textit{weighted digraph} we mean a triple $G=(V,E,\phi)$ where $V$ is the set of vertices, $E\subset V\times V$ is the set of directed edges (or arcs), and $\phi:E\rightarrow\mathbb{N}_0$ is the weight function on the arcs with the property that $\phi(u,v)\neq 0$ if and only if $(u,v)\in E$.   We do not have a need for loops in a graph and thus $(u,u)\notin E$ for all $u\in V$.  If $(u,v)\in E$ if and only if $(v,u) \in E$ and $\phi\equiv 1$ then we call $G$ a \textit{simple} graph and instead use the usual notation $G=(V,E)$.  The context will make it clear as to whether $G$ is a simple graph or a weighted digraph.

Let $G=(V,E,\phi)$ be a weighted digraph.  The \textit{neighborhood} of $u\in V$ is the set $N(u)=\{v\in V\;|\; (u,v) \in E\}$ and the \textit{degree} of $u$ is $\deg(u) = \sum_{v\in V} \phi(u,v)$.  For a simple graph $G$, $\deg(u)=|N(u)|$ for all $u\in V$, i.e., the number of vertices in $N(u)$.  More generally, for a subset $C\subseteq V$ we define the degree of $u$ in $C$ by
\[
\deg(u, C):=\sum_{v\in C} \phi(u, v).
\]
If $G$ is a simple graph then $\deg(u,C)=|N(u)\cap C|$.  If we label the vertices as $V=\{v_1,v_2,\ldots,v_n\}$, the adjacency matrix $A=A(G)$ of $G$ has entries $A_{i,j} = \phi(v_i, v_j)$.  The degree matrix of $G$ is the diagonal matrix $D=D(G)$ whose $i$th diagonal element is $\deg(v_i)$, and the Laplacian matrix of $G$ is $L(G) = D - A$.  

Finally, we recall that a linear single-input time-invariant system $\dot{x}=Ax+bu$ on $\real^n$ is controllable if and only if the smallest $A$-invariant subspace containing $b$ is all of $\real^n$, that is, $\spn\{b,Ab,\ldots,A^{n-1}b\}=\real^n$.  A well-known characterization of controllability is the Popov-Belevitch-Hautus (PBH) eigenvector test which states that $(A,b)$ is controllable if and only if $\xi^T b \neq 0$ for every eigenvector $\xi$ of $A^T$.

%

\subsection{Almost equitable partitions}


Let $G$ be a weighted digraph with vertex set $V=\{v_1,v_2,\ldots,v_n\}$.  The \textit{characteristic vector} of a subset $C\subset V$ is the binary vector $\xi\in\{0,1\}^n$ such that $\xi_i = 1$ if and only if $v_i \in C$.  Let $\pi=\{C_1,C_2,\ldots,C_k\}$ be a set partition of $V$, that is, $C_i \cap C_j=\emptyset$ if $i\neq j$ and $\bigcup_{i=1}^k C_i = V$.   The subsets $C_i$ will be called \textit{cells} of $\pi$.  The \textit{characteristic matrix} of $\pi$ is the $n\times k$ matrix $P_\pi$ whose $i$th column is the characteristic vector of $C_i$, for $i=1,\ldots,k$.  When no confusion arises we denote $P_\pi$ simply by $P$.  We say that $\pi$ is an \textit{almost equitable partition} (AEP) of $G$ if for every distinct ordered pair of cells $(C_i,C_j)$ it holds that $\deg(u, C_j) = \deg(v, C_j)$ for every $u,v \in C_i$.  In this case, we define $\deg(C_i, C_j):= \deg(u, C_j)$ for some (and hence all) $u\in C_i$.  We note that in general, $\deg(C_i, C_j) \neq \deg(C_j, C_i)$.  If $\pi$ is an AEP of $G$, the \textit{quotient graph $G_\pi$ of $G$ with respect to $\pi$} is the weighted digraph with vertex set $V(G_\pi)=\pi$ and arcs $(C_i, C_j) \in E(G_\pi)$ if and only if $\deg(C_i, C_j) \neq 0$ with arc weight $\phi(C_i,C_j) = \deg(C_i, C_j)$.  We denote the adjacency matrix of $G_\pi$ by $A_\pi = A(G_\pi)$ and its Laplacian matrix by $L_\pi=L(G_\pi)$, and we note that $A_\pi$ and $L_\pi$ are generally non-symmetric matrices.  An almost equitable partition $\pi=\{C_1,\ldots,C_k\}$ of $G$ is called an \textit{equitable partition} if in addition $\deg(u,C_i) = \deg(v, C_i)$ for all $u,v\in C_i$ and all $i=1,\ldots,k$.  Hence, the extra requirement imposed on an equitable partition is that vertices in any given cell $C_i$ have equal degree \textit{within} $C_i$.  To make this paper as self-contained as possible, below we provide an example of the previous notions.
\begin{example}
Consider the simple graph $G$ shown in Figure~\ref{fig:aep-example} with $n=11$ vertices.  Consider the vertex partition $\pi=\{C_1,C_2,C_3,C_4\}$ where $C_1=\{v_1,v_2,v_3,v_4\}$, $C_2=\{v_5,v_6\}$, $C_3=\{v_7,v_8,v_9,v_{10}\}$, and $C_4=\{v_{11}\}$.  The edges within each cell are displayed as dashed lines.   The reader is invited to verify that for any pair of distinct cells $(C_i, C_j)$ it holds that $\deg(v, C_j) = \deg(u,C_j)$ for every $u, v \in C_i$.   Notice that, for instance, $\deg(v_1, C_1) = 1$ while $\deg(v_3, C_1)=2$, and thus $\pi$ is an AEP of $G$ but not an equitable partition. The characteristic matrix $P$ of $\pi$ is the $n\times k = 11\times 4$ matrix
\[
P^T = 
\begin{bmatrix}
1&1&1&1&0&0&0&0&0&0&0\\
0&0&0&0&1&1&0&0&0&0&0\\
0&0&0&0&0&0&1&1&1&1&0\\
0&0&0&0&0&0&0&0&0&0&1
\end{bmatrix}.
\]
The quotient graph $G_\pi$ is also shown in Figure~\ref{fig:aep-example} and the adjacency and Laplacian matrix of $G_\pi$ are
\[
A_\pi = \begin{bmatrix}
0 & 1 & 0 & 0\\
2 & 0 & 2 & 1\\
0 & 1 & 0 & 1\\
0 & 2 & 4 & 0
\end{bmatrix},\quad L_\pi = \begin{bmatrix}
1 & -1 & 0 & 0\\
-2 & 5 & -2 & -1\\
0 & -1 & 2 & -1\\
0 & -2 & -4 & 6
\end{bmatrix}.
\]
We note that $A_\pi$ and $L_\pi$ are non-symmetric matrices. $\square$
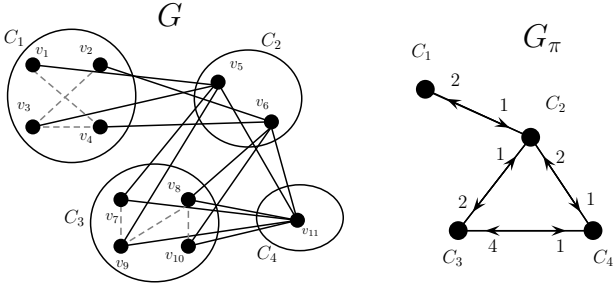
\begin{figure}[t]
\centering
\psscalebox{0.5 0.5} 
{
\begin{pspicture}(0,-3.5372415)(16.062069,3.5372415)
\definecolor{colour0}{rgb}{0.5,0.5,0.5}
\psline[linecolor=black, linewidth=0.04](7.8,-1.8820689)(4.903448,-2.5717242)
\psline[linecolor=black, linewidth=0.04](7.8,-1.8820689)(4.903448,-1.3303448)
\psline[linecolor=black, linewidth=0.04](7.8,-1.8820689)(3.110345,-1.3303448)
\psline[linecolor=black, linewidth=0.04](7.8,-1.8820689)(3.110345,-2.5717242)
\psline[linecolor=black, linewidth=0.04](7.110345,0.6006897)(7.8,-1.8820689)
\psline[linecolor=black, linewidth=0.04](5.7310343,1.704138)(7.8,-1.8820689)
\psline[linecolor=black, linewidth=0.04](7.110345,0.6006897)(4.903448,-1.3303448)
\psline[linecolor=black, linewidth=0.04](7.110345,0.6006897)(4.903448,-2.5717242)
\psline[linecolor=black, linewidth=0.04](5.5931034,1.704138)(3.110345,-1.3303448)
\psline[linecolor=black, linewidth=0.04](5.7310343,1.704138)(3.110345,-2.5717242)
\psline[linecolor=black, linewidth=0.04](5.7310343,1.704138)(0.76551723,2.2558622)
\psline[linecolor=black, linewidth=0.04](5.5931034,1.704138)(0.76551723,0.6006897)
\psline[linecolor=black, linewidth=0.04](7.110345,0.73862076)(2.5586207,0.6006897)
\psline[linecolor=black, linewidth=0.04](7.110345,0.73862076)(2.5586207,2.2558622)
\psellipse[linecolor=black, linewidth=0.04, dimen=outer](1.8,1.428276)(1.7241379,1.7931035)
\psellipse[linecolor=black, linewidth=0.04, dimen=outer](6.489655,1.3593105)(1.4482758,1.3103448)
\psellipse[linecolor=black, linewidth=0.04, dimen=outer](4.0068965,-1.9510344)(1.7241379,1.5862069)
\psellipse[linecolor=black, linewidth=0.04, dimen=outer](7.8689656,-1.8131033)(1.1724138,0.8965517)
\rput[bl](0.8413793,2.4696553){\large $v_1$}
\rput[bl](2.0068965,2.5317242){\large $v_2$}
\rput[bl](0.35172415,1.0144829){\large $v_3$}
\rput[bl](2.0068965,0.18689662){\large $v_4$}
\rput[bl](6.0068965,2.1179311){\large $v_5$}
\rput[bl](6.696552,1.0903449){\large $v_6$}
\rput[bl](2.6965518,-1.9441378){\large $v_7$}
\rput[bl](4.351724,-1.1165516){\large $v_8$}
\rput[bl](2.9724138,-3.185517){\large $v_9$}
\rput[bl](4.289655,-2.9855173){\large $v_{10}$}
\rput[bl](7.924138,-2.295862){$\large v_{11}$}
\rput[bl](0.0,2.7593105){\Large$C_1$}
\rput[bl](6.8344827,2.6834483){\Large$C_2$}
\rput[bl](1.6,-2.02){\Large$C_3$}
\rput[bl](6.724138,-2.9855173){\Large$C_4$}
\psline[linecolor=colour0, linewidth=0.04,linestyle=dashed](3.110345,-1.4682758)(3.110345,-2.5717242)
\psline[linecolor=colour0, linewidth=0.04,linestyle=dashed](3.110345,-2.5717242)(4.903448,-1.4682758)
\psline[linecolor=colour0, linewidth=0.04,linestyle=dashed](4.903448,-1.3303448)(4.903448,-2.709655)
\psline[linecolor=colour0, linewidth=0.04,linestyle=dashed](0.76551723,2.1179311)(2.5586207,0.6006897)
\psline[linecolor=colour0, linewidth=0.04,linestyle=dashed](2.5586207,2.2558622)(0.76551723,0.6006897)
\psline[linecolor=colour0, linewidth=0.04,linestyle=dashed](0.76551723,0.6006897)(2.5586207,0.6006897)
\psdots[linecolor=black, dotsize=0.4](0.76551723,2.2558622)
\psdots[linecolor=black, dotsize=0.4](2.5586207,2.2558622)
\psdots[linecolor=black, dotsize=0.4](0.76551723,0.6006897)
\psdots[linecolor=black, dotsize=0.4](2.5586207,0.6006897)
\psdots[linecolor=black, dotsize=0.4](3.110345,-1.3303448)
\psdots[linecolor=black, dotsize=0.4](3.110345,-2.5717242)
\psdots[linecolor=black, dotsize=0.4](4.903448,-2.5717242)
\psdots[linecolor=black, dotsize=0.4](4.903448,-1.3303448)
\psdots[linecolor=black, dotsize=0.4](5.696552,1.7937932)
\psdots[linecolor=black, dotsize=0.4](7.110345,0.73862076)
\psdots[linecolor=black, dotsize=0.4](7.8,-1.8820689)
\psdots[linecolor=black, dotsize=0.5](11.2,1.6075863)    
\psdots[linecolor=black, dotsize=0.5](12.075862,-2.157931)  
\psdots[linecolor=black, dotsize=0.5](15.662069,-2.157931)  
\psdots[linecolor=black, dotsize=0.5](14.006897,0.32482764)  
\psline[linewidth=1pt,arrowsize=5pt 3]{->}(11.2,1.6075863)(13.4455176,0.58137937)\rput[c](13.3,1.2){\Large $1$} 
\psline[linewidth=1pt,arrowsize=5pt 3]{->}(14.006897,0.32482764)(11.7613794,1.35103457)\rput[c](12,1.85){\Large $2$} 
\psline[linewidth=1pt,arrowsize=5pt 3]{->}(14.006897,0.32482764)(12.462069,-1.66137927)\rput[c](12.2,-1.4){\Large $2$} 
\psline[linewidth=1pt,arrowsize=5pt 3]{->}(12.075862,-2.157931)(13.62069,-0.17172409)\rput[c](13.2,-0.1){\Large $1$} 
\psline[linewidth=1pt,arrowsize=5pt 3]{->}(12.075862,-2.157931)(14.9448276,-2.157931)\rput[c](14.8,-2.55){\Large $1$} 
\psline[linewidth=1pt,arrowsize=5pt 3]{->}(15.662069,-2.157931)(12.7931034,-2.157931)\rput[c](13,-2.55){\Large $4$} 
\psline[linewidth=1pt,arrowsize=5pt 3]{->}(14.006897,0.32482764)(15.3310346,-1.66137927)\rput[c](15.6,-1.3){\Large $1$} 
\psline[linewidth=1pt,arrowsize=5pt 3]{->}(15.662069,-2.157931)(14.3379314,-0.17172409)\rput[c](14.8,-0.2){\Large $2$} 
\psline[linecolor=black, linewidth=0.04](12.075862,-2.157931)(14.006897,0.32482764)(15.662069,-2.157931)(12.075862,-2.157931)
\psline[linecolor=black, linewidth=0.04](11.248276,1.5662069)(14.006897,0.32482764)
\rput[bl](10.834483,2.393793){\Large $C_1$}
\rput[bl](14.4,1){\Large$C_2$}
\rput[bl](11.662069,-3.1){\Large$C_3$}
\rput[bl](15.662069,-3.1){\Large$C_4$}
\rput[bl](13.793103,2.6075864){\Huge $G_{\pi}$}
\rput[bl](4.075862,3.2972414){\Huge $G$}
\end{pspicture}
}
\caption{An example of a graph $G$ with an AEP $\pi$ and the induced quotient graph $G_\pi$}\label{fig:aep-example}
\end{figure}
\end{example}

The relationship between AEPs and invariant subspaces of the Laplacian matrix $L$ is the following.
\begin{theorem}[\cite{DC-CD-PR:07}]\label{thm:equi-lap}
Let $G$ be a weighted digraph and let $\pi=\{C_1,C_2,\ldots,C_k\}$ be a partition of $V(G)$ with characteristic matrix $P$.  Then $\pi$ is an AEP of $G$ if and only if $\img(P)$ is $L$-invariant.  In this case, the Laplacian matrix of the quotient graph $G_\pi$ is $\lap_{\pi}=(P^TP)^{-1}P^T\lap P$.
\end{theorem}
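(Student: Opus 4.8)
The plan is to reduce everything to a column-by-column analysis of the matrix $\lap P$. First I would record the elementary structural facts about $P$: its columns are the characteristic vectors $\xi_1,\ldots,\xi_k$ of the cells, which have pairwise disjoint supports and are nonzero, hence are orthogonal and $P$ has full column rank $k$. Consequently $P^TP=\diag(|C_1|,\ldots,|C_k|)$ is invertible, which is exactly what makes the expression $(P^TP)^{-1}P^T\lap P$ well defined. Since $\img(P)$ is precisely the space of vectors that are constant on each cell, the condition that $\img(P)$ be $\lap$-invariant is equivalent to the condition that every column $\lap\xi_j$ of $\lap P$ again be constant on each cell.

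Next I would compute these columns entrywise. Writing $\lap=D-A$ and evaluating at a vertex $u\in C_\ell$ gives
\[
(\lap\xi_j)_u=\deg(u)\,(\xi_j)_u-\deg(u,C_j),
\]
since $(A\xi_j)_u=\sum_{v\in C_j}\phi(u,v)=\deg(u,C_j)$. For $\ell\neq j$ the first term vanishes and $(\lap\xi_j)_u=-\deg(u,C_j)$, so constancy of this column over $u\in C_\ell$ is exactly the requirement $\deg(u,C_j)=\deg(v,C_j)$ for all $u,v\in C_\ell$, i.e.\ the defining condition of an AEP for the ordered pair $(C_\ell,C_j)$. For $\ell=j$ one instead has $(\lap\xi_j)_u=\deg(u)-\deg(u,C_j)=\sum_{m\neq j}\deg(u,C_m)$, using that every row of $\lap$ sums to zero, equivalently $\deg(u)=\sum_m\deg(u,C_m)$. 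Collecting these observations yields the equivalence: if $\pi$ is an AEP then every off-diagonal column block is constant by hypothesis and the diagonal block is constant because it is a sum of such terms, so $\img(P)$ is $\lap$-invariant; conversely $\lap$-invariance forces constancy on each $C_\ell$ for $\ell\neq j$, which is precisely the full set of AEP conditions over all distinct ordered pairs.

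For the formula, I would exploit that $\lap$-invariance of $\img(P)$ means $\lap P=PM$ for a unique $k\times k$ matrix $M$, unique because $P$ has full column rank. Left-multiplying by $(P^TP)^{-1}P^T$ and cancelling $P^TP$ then gives $M=(P^TP)^{-1}P^T\lap P$, so it only remains to identify $M$ with the quotient Laplacian $\lap_\pi$. Reading off $\lap P=PM$ at a vertex $u\in C_\ell$ shows $M_{\ell j}=(\lap\xi_j)_u$, and the computation above gives $M_{\ell j}=-\deg(C_\ell,C_j)$ for $\ell\neq j$ and $M_{jj}=\sum_{m\neq j}\deg(C_j,C_m)$. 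Comparing with the definition of $G_\pi$, whose adjacency matrix has off-diagonal entries $\deg(C_\ell,C_j)$ and whose loop-free quotient degree is $\deg(C_j)=\sum_{m\neq j}\deg(C_j,C_m)$, shows these are exactly the entries of $D_\pi-A_\pi=\lap_\pi$.

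I expect the one genuinely delicate point to be the diagonal case $\ell=j$: one must verify that $\lap$-invariance imposes no constraint there beyond the off-diagonal AEP conditions already collected. This is where the zero-row-sum property $\lap\ones=0$ is essential, since it lets the within-cell contribution be rewritten as a sum of between-cell degrees, each of which is already controlled by an AEP condition. The remainder is routine bookkeeping, together with care about the no-loops convention used in defining the degree of a cell in the quotient digraph.
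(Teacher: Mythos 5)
Your proposal is correct, and in fact the paper contains no proof of this statement at all: it is quoted directly from \cite{DC-CD-PR:07}, so there is no in-paper argument to compare against. Your column-by-column computation of $(\lap\xi_j)_u=\deg(u)(\xi_j)_u-\deg(u,C_j)$ is exactly the standard proof from the cited literature: the off-diagonal constancy conditions are verbatim the AEP conditions for distinct ordered pairs, the diagonal case is absorbed via the zero-row-sum identity $\deg(u)=\sum_m\deg(u,C_m)$, and the full column rank of $P$ (so $P^TP=\diag(|C_1|,\ldots,|C_k|)$ is invertible) correctly yields the unique $M$ with $\lap P=PM$ and $M=(P^TP)^{-1}P^T\lap P=\lap_\pi$. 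You also correctly avoid any use of symmetry of $\lap$, which matters since the theorem is stated for weighted digraphs, and your handling of the loop-free convention in identifying the diagonal of $M$ with the quotient degrees $\sum_{m\neq j}\deg(C_j,C_m)$ matches the paper's definition of $G_\pi$ (as one can check against the example, e.g.\ $(\lap_\pi)_{22}=5$).
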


\begin{remark}\label{rmk:eig-struct}
We make the following important observations as a consequence of Theorem~\ref{thm:equi-lap} and which explains the connection between AEPs and obstructions to controllability.  Let $G=(V,E)$ be a simple $n$-vertex graph, let $\pi=\{C_1,\ldots,C_k\}$ be an AEP of $G$, and let $P$ be the characteristic matrix of $\pi$.  Since the subspace $\img(P)\subset\real^n$ is $L$-invariant, there exists a basis for $\img(P)$ consisting of eigenvectors of $L$.  Moreover, since $L$ is symmetric then $\img(P)^{\perp}=\ker(P^T)$ is also $L$-invariant and thus $\ker(P^T)\subset\real^n$ also has a basis of eigenvectors of $L$.  Now, by definition of $P$, $x\in\img(P)$ if and only if the components of $x$ are constant on each cell of $\pi$, that is, $\forall\; v_i,v_j\in C_\ell $ we have $x_i=x_j$ and this holds for all $\ell=1,\ldots,k$.  Similarly, $x\in\ker(P^T)$ if and only if the components of $x$ sum to zero on the cells of $\pi$, that is, $\sum_{v_j \in C_\ell} x_j = 0$ for each $\ell=1,\ldots,k$.  Hence, if $\pi$ is an AEP of $G$ and $L$ is a symmetric matrix, then the eigenvectors of $L$ can be divided into two classes: one class is contained in $\img(P)$ and are characterized by having a constant value on each cell of $\pi$, and the second class is contained in $\ker(P^T)$ and are characterized by summing to zero on each cell of $\pi$.  Hence, if $b\in\{0,1\}^n$ is constant on the cells of $\pi$, that is $b=P\bar{b}$ for some $\bar{b}\in\{0,1\}^k$, then $b$ is clearly orthogonal to the eigenvectors of $L$ contained in $\ker(P^T)$ and therefore by the PBH test $(L,b)$ is uncontrollable.  We illustrate our remarks with our running example.
\end{remark}

\begin{example}
Consider again the simple graph $G$ shown in Figure~\ref{fig:aep-example}, with AEP $\pi=\{C_1,C_2,C_3,C_4\}$ where $C_1=\{v_1,v_2,v_3,v_4\}$, $C_2=\{v_5,v_6\}$, $C_3=\{v_7,v_8,v_9,v_{10}\}$, and $C_4=\{v_{11}\}$.  Hence, $x\in \img(P_\pi)$ if and only if $x = (\alpha,\alpha,\alpha,\alpha,\beta,\beta,\gamma,\gamma,\gamma,\gamma,\delta)$ for some scalars $\alpha,\beta,\gamma,\delta\in\real$.  Since $\rank(P_\pi)=k=4$, there are $4$ linearly independent eigenvectors of $L$ of the form of $x$ above.  One such vector is of course the all ones vector $e\in\real^{11}$ in which case $\alpha=\beta=\gamma=\delta=1$.  In general, since an eigenvector $x$ of the form above (with eigenvalue $\lambda\neq 0$) is orthogonal to $e$ we must have $4\alpha+2\beta+4\gamma+\delta=0$.  On the other hand, $\ker(P^T)$ is a 7-dimensional subspace of $\real^{11}$ and it can be verified that $\ker(P^T)$ has a basis consisting of eigenvectors of $L$.  Since $C_4=\{v_{11}\}$ is a singleton cell, all eigenvectors of $L$ in $\ker(P^T)$ have a zero in the last entry.  It follows that if, for example, $b=(0,0,0,0,1,1,0,0,0,0,1)\in \img(P)$ then $b$ is orthogonal to every eigenvector of $L$ in $\ker(P^T)$ and thus $(L,b)$ is uncontrollable by the PBH test.
\end{example}

There is a well-known relationship between the eigenvalues/eigenvectors of $L$ and $L_\pi$, namely that $(\ez,\lambda)$ is an eigenvector/eigenvalue pair of $L_\pi$ if and only if $(P\ez,\lambda)$ is an eigenvector/eigenvalue pair of $L$, where $P$ is the characteristic matrix of $\pi$ \cite{DC-CD-PR:07}.  Now, it is possible that the quotient graph $G_\pi$ itself contains an AEP and in this case an AEP of $G_\pi$ induces an AEP of $G$ in the following way.  Let $\pi=\{C_1,\ldots,C_k\}$ be a partition of $V$ and let $\rho=\{S_1,\ldots,S_{m}\}$ be a partition of $\pi$.  We define the \textit{$\rho$-merge of $\pi$} as the partition $\pi_\rho=\{\overline{C}_1,\overline{C}_2,\ldots,\overline{C}_{m}\}$ of $V$ where, for $j=1,2,\ldots,{m}$, we have $$\overline{C}_j = \bigcup_{C_i\in S_j} C_i.$$ 
Roughly speaking, the $\rho$-merge of $\pi$ is simply obtained by ``flattening out'' each cell $S_j$ of $\rho$.  For example, if $\pi=\{C_1,C_2,C_3,C_4\}=\{\{1,2,3\},\{4,5\},\{6,7\},\{8\}\}$ and $\rho=\{S_1,S_2\}=\{\{C_1,C_4\},\{C_2,C_3\}\}$ then the $\rho$-merge of $\pi$ is $\pi_{\rho} = \{\{1,2,3,8\},\{4,5,6,7\},\}$.  Notice that the partition $\pi$ is \textit{finer} than $\pi_\rho$, i.e., every cell in $\pi$ is a subset of a cell of $\pi_\rho$.   Now, if $\pi$ is an AEP of $G$ and $\rho$ is an AEP of $G_\pi$, then it is known that the $\rho$-merge of $\pi$ is an AEP of $G$ \cite[Prop.\ 1]{CA-BG:17}.

If $\rho$ is an AEP of $G_{\pi}$ then, by Theorem~\ref{thm:equi-lap}, $\img(P_\rho)$ is $L_\pi$-invariant but it does not generally hold that the orthogonal complement $\img(P_{\rho})^\perp$ is $L_{\pi}$-invariant since $L_{\pi}$ is not generally a symmetric matrix.  Hence, the discussion in Remark~\ref{rmk:eig-struct} regarding the eigenvector structure of $L$ does not generally hold for $L_\pi$, i.e., the eigenvectors of $L_\pi$ do not generally split into those that are constant on the cells of $\rho$ and those that sum to zero on the cells of $\rho$.  There is, however, a case in which the eigenvectors of $L_\pi$ do split into those in $\img(P_\rho)$ and $\img(P_{\rho})^\perp$, and this case is present in the network topologies that we characterize.  With this in mind we introduce the following notion.
\begin{definition}
Let $\pi=\{C_1,C_2,\ldots,C_k\}$ be a partition of $V$.  A partition $\rho=\{S_1,S_2,\ldots,S_{m}\}$ of $\pi$ is said to be \textit{$\pi$-regular} if each cell $S_j\in \rho$ consists of cells of $\pi$ of the same cardinality.
\end{definition}
In other words, if $S_j=\{C_{j,1},C_{j,2},\ldots,C_{j,m_j}\}$ then $\rho$ is \textit{$\pi$-regular} if $|C_{j,1}|=|C_{j,2}|=\cdots=|C_{j,m_j}|$, and this holds for all $j=1,2,\ldots,m$.  As an example, if $V=\{1,2,\ldots,12\}$ and $\pi=\{C_1,C_2,\ldots,C_6\}$ where $C_1=\{1,2,3\}$, $C_2=\{4,5\}$, $C_3=\{6\}$, and $C_4=\{7,8\}$, $C_5=\{9\}$, $C_6=\{10,11,12\}$ then $\rho=\{\{C_1,C_6\}, \{C_2,C_4\}, \{C_3,C_5\}\}$ is $\pi$-regular since $|C_1|=|C_6|$, $|C_2|=|C_4|$, and $|C_3|=|C_5|$.  The relationship between $\pi$-regularity of $\rho$ and the $L_\pi$-invariance of $\img(P_\rho)^\perp$ is then given in the following lemma whose proof can be found in Appendix~\ref{app:lemma-regular}.
\begin{lemma}\label{lem:pi-regular}
Let $G$ be a simple graph.  Let $\pi$ be an AEP of $G$ and let $\rho$ be an AEP of the quotient graph $G_\pi$.  If $\rho$ is $\pi$-regular then $\img(P_\rho)^\perp=\ker(P_\rho^T)$ is $L_\pi$-invariant.
\end{lemma}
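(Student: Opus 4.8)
The plan is to exploit the fact that, although $\lap_\pi$ fails to be symmetric, it is \emph{self-adjoint} with respect to a natural weighted inner product determined by the cell sizes of $\pi$, and then to use $\pi$-regularity to reconcile that weighted orthogonality with the Euclidean orthogonality appearing in the statement. First I would record the elementary identity $P^TP=\diag(|C_1|,\ldots,|C_k|)=:D_\pi$, which holds because the columns of $P$ are characteristic vectors of pairwise disjoint cells, so that $(P^TP)_{ij}=|C_i\cap C_j|$. By Theorem~\ref{thm:equi-lap} the quotient Laplacian is $\lap_\pi=D_\pi^{-1}P^T\lap P$, hence $D_\pi\lap_\pi=P^T\lap P$. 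Since $\lap$ is symmetric the right-hand side is symmetric, whence $D_\pi\lap_\pi=(D_\pi\lap_\pi)^T=\lap_\pi^TD_\pi$. Equivalently, writing $\langle x,y\rangle_{D_\pi}:=x^TD_\pi y$, the operator $\lap_\pi$ satisfies $\langle \lap_\pi x,y\rangle_{D_\pi}=\langle x,\lap_\pi y\rangle_{D_\pi}$ for all $x,y$.

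Next, since $\rho$ is an AEP of $G_\pi$, Theorem~\ref{thm:equi-lap} applied to $G_\pi$ gives that $\img(P_\rho)$ is $\lap_\pi$-invariant. The standard argument for self-adjoint operators then shows that the $D_\pi$-orthogonal complement of $\img(P_\rho)$ is again $\lap_\pi$-invariant: if $x$ is $D_\pi$-orthogonal to $\img(P_\rho)$, then for every $w\in\img(P_\rho)$ we have $\langle \lap_\pi x,w\rangle_{D_\pi}=\langle x,\lap_\pi w\rangle_{D_\pi}=0$, because $\lap_\pi w\in\img(P_\rho)$. This $D_\pi$-orthogonal complement is precisely $\ker(P_\rho^TD_\pi)$, since $x^TD_\pi P_\rho=0$ if and only if $P_\rho^TD_\pi x=0$.

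The remaining, and genuinely crucial, step is to show that $\pi$-regularity forces $\ker(P_\rho^TD_\pi)=\ker(P_\rho^T)=\img(P_\rho)^\perp$; this is the main obstacle, because for a general AEP $\rho$ the weighted complement $\ker(P_\rho^TD_\pi)$ differs from the Euclidean complement $\ker(P_\rho^T)$, and without the hypothesis the conclusion simply fails. The key observation is that the $j$th row of $P_\rho^TD_\pi$ is supported on those indices $i$ with $C_i\in S_j$ and carries the weight $|C_i|$ in position $i$; by $\pi$-regularity all such cells share a common cardinality $n_j:=|C_{j,1}|=\cdots=|C_{j,m_j}|$, so that row equals $n_j$ times the $j$th row of $P_\rho^T$. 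Setting $D_\rho:=\diag(n_1,\ldots,n_m)$ this reads $P_\rho^TD_\pi=D_\rho P_\rho^T$, and since each $n_j$ is a positive integer, $D_\rho$ is invertible; therefore $\ker(P_\rho^TD_\pi)=\ker(D_\rho P_\rho^T)=\ker(P_\rho^T)$.

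Combining the two conclusions, $\img(P_\rho)^\perp=\ker(P_\rho^T)=\ker(P_\rho^TD_\pi)$ is $\lap_\pi$-invariant, which is exactly the assertion of the lemma. I expect no delicate estimates to arise; the entire content lies in identifying the correct $D_\pi$-weighted inner product and in verifying the factorization $P_\rho^TD_\pi=D_\rho P_\rho^T$, the latter being where $\pi$-regularity is indispensable.
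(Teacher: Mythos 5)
Your proof is correct, and while it rests on the same two algebraic facts as the paper's proof, it routes them differently. The paper proceeds by direct computation: writing $K=P_\pi^TP_\pi$, it uses $L_\pi=K^{-1}P_\pi^TLP_\pi$, the intertwining relation $L_\pi P_\rho=P_\rho L_{\pi_\rho}$ with the quotient Laplacian of the $\rho$-merge, and the commutation $KP_\rho=P_\rho\tilde{K}$ (valid by $\pi$-regularity) to derive $L_\pi^TP_\rho=P_\rho\bigl(\tilde{K}L_{\pi_\rho}\tilde{K}^{-1}\bigr)$, from which invariance of $\ker(P_\rho^T)$ is immediate. You instead extract from the same formula the observation that $D_\pi L_\pi=P^TLP$ is symmetric, i.e.\ that $L_\pi$ is self-adjoint in the weighted inner product $\langle x,y\rangle_{D_\pi}=x^TD_\pi y$, so the $D_\pi$-orthogonal complement $\ker(P_\rho^TD_\pi)$ of the invariant subspace $\img(P_\rho)$ is automatically $L_\pi$-invariant, and you invoke $\pi$-regularity only at the last step, via $P_\rho^TD_\pi=D_\rho P_\rho^T$ — which is precisely the transpose of the paper's identity $KP_\rho=P_\rho\tilde{K}$ — to identify the weighted complement with the Euclidean one. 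Your packaging buys two things: it needs strictly less machinery, since you never use the $\rho$-merge $\pi_\rho$ or its quotient Laplacian $L_{\pi_\rho}$ but only the invariance of $\img(P_\rho)$ guaranteed by Theorem~\ref{thm:equi-lap}; and it is more explanatory, in that it makes clear why the complement is invariant (weighted self-adjointness of $L_\pi$, which holds for any AEP $\rho$) and isolates $\pi$-regularity as exactly the condition reconciling $\ker(P_\rho^TD_\pi)$ with $\ker(P_\rho^T)$ — without it, one still gets an invariant complement, just not the Euclidean one appearing in the statement. The paper's computation, by contrast, yields slightly more explicit information, namely the concrete intertwining matrix $\tilde{K}L_{\pi_\rho}\tilde{K}^{-1}$ relating $P_\rho^TL_\pi$ to $P_\rho^T$.
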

The upshot of Lemma~\ref{lem:pi-regular} is that even though $L_\pi$ may not be a symmetric matrix, the discussion in Remark~\ref{rmk:eig-struct} regarding the splitting structure of the eigenvectors of $L$ is also applicable to $L_\pi$ provided $\rho$ is a $\pi$-regular AEP of $G_\pi$.  Hence, if the conditions of Lemma~\ref{lem:pi-regular} are satisfied, then the eigenvectors of $L_\pi$ can be partitioned into two classes, those contained in $\img(P_\rho)$ and the others contained in $\ker(P_\rho^T)$.

\section{Twin Graphs}\label{sec:twins}

One of the main structural properties possessed by the network topologies that we characterize is the existence of many \textit{twin vertices}.  The vertices $u,v\in V(G)$ are \textit{twins} if $N(u)\backslash \{v\} = N(v)\backslash\{u\}$.  We note that it is possible for $u$ to be twins with multiple vertices, that is, that $\{u,v\}$ and $\{u,w\}$ are twins with $v\neq w$.  In this case, either $\{u,v,w\}$ are all mutually adjacent or non-adjacent.  The existence of twin vertices induces an equitable partition as follows.  Recall that a permutation $\sigma:V\rightarrow V$ is an \textit{automorphism}, or \textit{symmetry}, of the graph $G=(V,E)$ if $\{u,v\}\in E$ if and only if $\{\sigma(u),\sigma(v)\}\in E$ for all $u,v\in V$.  We denote by $\aut(G)$ the group of automorphisms of $G$.  It is clear that if $u$ and $v$ are twin vertices then the permutation $\sigma:V\rightarrow V$ that transposes $u$ and $v$ and fixes all other vertices is an automorphism of $G$.  Moreover,  if without loss of generality $u=v_1$ and $v=v_2$, then the partition $\pi=\{\{v_1,v_2\}, \{v_3\}, \{v_4\}, \ldots, \{v_n\}\}$ is an equitable partition of $G$.  The following summarizes the relationship between twin vertices and eigenvectors/eigenvalues of $L$ (see \cite[pg. 46]{TB-JL-PS:07} for a proof).

\begin{proposition}[\cite{TB-JL-PS:07}]\label{prop:twin-eig}
Let $G=(V,E)$ be a simple graph with Laplacian matrix $L$, let $u,v\in V$, and let $\sigma:V\rightarrow V$ be the permutataion that transposes $u$ and $v$ and fixes all other vertices.  Then $\sigma$ is an automorphism of $G$ if and only if $\vv=\ev_u-\ev_v$ is an eigenvector of $\lap$.  In this case, $L$ has corresponding integer eigenvalue $\lambda=\deg(u) + A_{u,v}$.
\end{proposition}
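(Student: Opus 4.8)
The plan is to prove both implications of the biconditional by directly computing the action of the Laplacian $\lap = D-A$ on the candidate eigenvector $\vv = \ev_u - \ev_v$ and reading off the conditions under which $\lap\vv$ is a scalar multiple of $\vv$. Before the computation I would first record the elementary equivalence that $\sigma$ (the transposition of $u$ and $v$) is an automorphism of $G$ if and only if $u$ and $v$ are twins: unwinding the automorphism condition $\{a,b\}\in E \Leftrightarrow \{\sigma(a),\sigma(b)\}\in E$ on the edges incident with $u$ and $v$ reduces it to the single requirement $A_{w,u}=A_{w,v}$ for every $w\notin\{u,v\}$, which is precisely $N(u)\backslash\{v\}=N(v)\backslash\{u\}$.

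The core of the argument is the entrywise evaluation of $\lap\vv$. Writing $\vv_u=1$, $\vv_v=-1$, and $\vv_w=0$ otherwise, and using $A_{u,u}=A_{v,v}=0$ since $G$ has no loops, I would split into three cases. For $w\notin\{u,v\}$ one gets $(\lap\vv)_w = A_{w,v}-A_{w,u}$; for $w=u$, $(\lap\vv)_u = \deg(u)+A_{u,v}$; and for $w=v$, $(\lap\vv)_v = -\deg(v)-A_{u,v}$ (here using $A_{u,v}=A_{v,u}$). This single computation supplies everything needed for both directions, and it treats uniformly the two cases $A_{u,v}=0$ and $A_{u,v}=1$.

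For the forward direction I would assume $\sigma$ is an automorphism, i.e.\ $A_{w,u}=A_{w,v}$ for all $w\notin\{u,v\}$, so that the off-coordinates vanish. The remaining point, and the one step that needs more than bookkeeping, is to check that the $u$- and $v$-coordinates are negatives of each other, which requires $\deg(u)=\deg(v)$. This is where I would be most careful: summing the twin identity $A_{w,u}=A_{w,v}$ over all $w\notin\{u,v\}$ and adding the common term $A_{u,v}$ (with $A_{u,u}=A_{v,v}=0$) yields $\deg(u)=\deg(v)$, so that $(\lap\vv)_u = -(\lap\vv)_v = \deg(u)+A_{u,v}$. Hence $\lap\vv=\lambda\vv$ with $\lambda=\deg(u)+A_{u,v}$, a non-negative integer since $\deg(u)$ is an integer and $A_{u,v}\in\{0,1\}$.

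For the converse I would suppose $\vv$ is an eigenvector, $\lap\vv=\lambda\vv$. Since $\vv_w=0$ for $w\notin\{u,v\}$, the eigen-equation forces $(\lap\vv)_w=\lambda\vv_w=0$ for all such $w$, and by the case computation this says $A_{w,v}-A_{w,u}=0$, i.e.\ $A_{w,u}=A_{w,v}$ for every $w\notin\{u,v\}$. This is exactly the twin condition, hence by the preliminary equivalence $\sigma$ is an automorphism, completing the proof. The only genuinely non-mechanical ingredient is the degree-equality observation in the forward direction; everything else is a routine Laplacian computation.
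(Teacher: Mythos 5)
Your proof is correct and complete. The paper itself does not prove this proposition but defers to \cite[pg.~46]{TB-JL-PS:07}, so there is no in-paper argument to compare against; your entrywise evaluation of $\lap(\ev_u-\ev_v)$, combined with the degree-equality step (summing $A_{w,u}=A_{w,v}$ over $w\notin\{u,v\}$ to get $\deg(u)=\deg(v)$) and the observation that only the off-coordinates are needed for the converse, is exactly the standard direct verification and correctly handles the adjacent and non-adjacent twin cases uniformly through the formula $\lambda=\deg(u)+A_{u,v}$.
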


An eigenvector of $\lap$ of the form $\vv=\ev_u-\ev_v$ is known as a Faria eigenvector \cite{TB-JL-PS:07}.  Clearly, a graph containing the Faria eigenvector $\vv=\ev_u-\ev_v$ will result in the uncontrollability of $(L,b)$ for all binary vectors $b$ whose entries are equal on $u$ and $v$ since then $\langle \vv, b\rangle = b_u - b_v = 0$.  Note that clearly $x=e_u-e_v\in \ker(P_\pi^T)$ where $\pi=\{\{u,v\}, \{v_3\}, \{v_4\}, \ldots, \{v_n\}\}$.

We now obtain an upper bound on the number of twin vertices in a graph whose Laplacian matrix has simple eigenvalues.  To that end, we first recall that the \textit{order} of a permutation $\sigma:V\rightarrow V$ is the smallest integer $k$ such that $\sigma^k:=\sigma\circ\sigma\circ\cdots\circ\sigma=\textup{id}$, where in the composition $\sigma$ appears $k$-times.  
\begin{lemma}\label{lem:max-twins}
Suppose that $G$ is a simple graph on $n$ vertices and assume that $L $ has simple spectrum.  The following hold:
\begin{compactenum}[(i)]
\item If $\{u,v\}$ are twin vertices then $\deg(u)=\deg(v)<  n - 1$.
\item If $n$ is even then the maximal number of twins is $t = \frac{n}{2} -1$, and this bound is sharp.
\item If $n$ is odd then the maximal number of twins is $t=\frac{n-1}{2}$, and this bound is sharp.
\end{compactenum}
\end{lemma}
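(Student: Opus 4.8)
The plan is to handle the three parts in order, using Proposition~\ref{prop:twin-eig} as the main engine and feeding part~(i) into a parity argument for the even case. For part~(i), observe first that the twin condition $N(u)\setminus\{v\}=N(v)\setminus\{u\}$ already forces $\deg(u)=\deg(v)$, whether or not $u,v$ are adjacent, since the two neighbourhoods can differ only in the elements $u,v$ themselves. It remains to exclude $\deg(u)=\deg(v)=n-1$. If this held, both $u$ and $v$ would be adjacent to every other vertex, hence isolated in the complement $\overline{G}$, so for $n\ge 3$ the graph $\overline{G}$ has at least three connected components and the $0$-eigenspace of $L(\overline{G})$ has dimension at least $3$. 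Since $L(G)+L(\overline{G})=nI-ee^T$ reduces to $L(G)+L(\overline{G})=nI$ on $e^\perp$, each $0$-eigenvector of $L(\overline{G})$ lying in $e^\perp$ is an $n$-eigenvector of $L$; there are at least two independent such vectors, so $n$ is an eigenvalue of $L$ of multiplicity at least $2$, contradicting the simple spectrum. Hence $\deg(u)=\deg(v)\le n-2<n-1$.

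For the counting bounds I would first show that twins occur only in disjoint pairs. If $u$ were twin to two distinct vertices $v,w$, then (as noted in the text) $\{u,v,w\}$ are mutually twin, so by Proposition~\ref{prop:twin-eig} the linearly independent vectors $e_u-e_v$ and $e_u-e_w$ are eigenvectors of $L$ for the common eigenvalue $\deg(u)+A_{u,v}=\deg(u)+A_{u,w}$, again contradicting simple spectrum. Thus each vertex has at most one twin, the twin vertices split into $t$ disjoint pairs, and counting vertices gives $2t\le n$, i.e.\ $t\le\lfloor n/2\rfloor$; this is already the sharp bound $(n-1)/2$ of part~(iii). The key auxiliary fact for the even case is an \emph{all-or-nothing} lemma: for two disjoint twin pairs $\{u_i,v_i\}$ and $\{u_j,v_j\}$, either no cross edge or all four cross edges are present. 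Indeed, the twin condition forces $N(u_i)$ and $N(v_i)$ to meet $\{u_j,v_j\}$ in the same set $T$, and likewise $N(u_j),N(v_j)$ to meet $\{u_i,v_i\}$ in the same set $T'$; an edge count between the pairs gives $|T|=|T'|$, and the value $|T|=1$ would force $|T'|=2$, so $|T|\in\{0,2\}$.

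For part~(ii) I argue by contradiction, assuming $n=2m$ and $t=m$, so that \emph{every} vertex lies in a twin pair. By the all-or-nothing lemma each $u_i$ is joined to every other pair through an even number of edges, whence $\deg(u_i)\equiv A_{u_i,v_i}\pmod 2$ and the Faria eigenvalue $\lambda_i=\deg(u_i)+A_{u_i,v_i}$ is \emph{even}. Connectivity (guaranteed because a simple spectrum makes $0$ simple) gives $\lambda_i\ge 2$, while part~(i) yields $\lambda_i\le n-1$, sharpened by evenness to $\lambda_i\le n-2$. Thus $\lambda_1,\dots,\lambda_m$ all lie in the $(m-1)$-element set $\{2,4,\dots,2m-2\}$, so by the pigeonhole principle two of them coincide, contradicting the simple spectrum. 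Hence $t\le m-1=\tfrac{n}{2}-1$.

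Finally I would prove sharpness by exhibiting, for each parity, an explicit graph attaining the bound. The scheme is to choose the equitable partition $\pi$ into $t$ twin pairs together with $n-2t$ singleton cells ($n-2t=1$ for odd $n$, and $=2$ for even $n$), with the pairs assigned \emph{pairwise distinct} degrees so that the Faria eigenvalues $\lambda_i$ are distinct. Because $\pi$ is equitable, Remark~\ref{rmk:eig-struct} gives $\ker(P^T)=\operatorname{span}\{e_{u_i}-e_{v_i}\}$, and together with Theorem~\ref{thm:equi-lap} the spectrum splits as $\spec(L)=\{\lambda_1,\dots,\lambda_t\}\sqcup\spec(L_\pi)$, where $L_\pi$ is the $(n-t)\times(n-t)$ quotient Laplacian. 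Simplicity of $\spec(L)$ then reduces to a finite check: that $L_\pi$ has simple spectrum and that $\spec(L_\pi)$ is disjoint from the integers $\{\lambda_i\}$. The main obstacle is precisely certifying this no-collision condition; the parity argument of part~(ii) shows it is genuinely unavoidable when $n$ is even and all vertices are paired, which is what forces the even bound to drop by one, so the remaining work is to produce quotients (e.g.\ ones with irrational or otherwise non-integer eigenvalues) whose spectra steer clear of the prescribed Faria values.
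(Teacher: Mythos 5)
Your upper-bound arguments are correct and in places more self-contained than the paper's, though the skeleton is essentially the same. For (i), the paper argues directly: $\deg(u)=n-1$ forces $u\sim v$, so Proposition~\ref{prop:twin-eig} gives the Faria eigenvector $e_u-e_v$ with eigenvalue $n$, and $-e+ne_u$ is a second, independent eigenvector for $n$; your complement-component argument reaches the same multiplicity-two contradiction and has the small virtue of making the implicit hypothesis $n\geq 3$ visible (for $n=2$ the paper's two vectors coincide, and indeed $K_2$ violates the claim). For pair-disjointness, you replace the paper's appeal to the cited fact that an automorphism of order $\geq 3$ forces a repeated Laplacian eigenvalue by two independent Faria eigenvectors sharing the eigenvalue $\deg(u)+A_{u,v}$ --- more elementary, same effect. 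Your all-or-nothing lemma is exactly the structural fact the paper compresses into $\deg(u)=2q_u+A_{u,v}$, and your pigeonhole of the $m$ distinct even values $\lambda_i$ into the $(m-1)$-element set $\{2,4,\ldots,n-2\}$ (capped via part (i)) is a mild rearrangement of the paper's version, which instead observes that $n/2$ distinct even eigenvalues in $[2,n]$ must include $n$ itself, contradicting (i).

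The genuine gap is sharpness in (ii) and (iii). The lemma asserts the bounds are \emph{attained}, and your proposal only outlines a construction --- split $\spec(L)=\spec(L_\pi)\sqcup\{\lambda_1,\ldots,\lambda_t\}$ via the twin partition, then arrange $\spec(L_\pi)$ to avoid the Faria values --- while explicitly conceding that certifying this no-collision condition is ``the remaining work.'' That condition is the entire content of sharpness; without a concrete graph (or a certified family) nothing is proved, and your own parity argument shows collisions are not a mere technicality. A secondary slip: pairwise distinct pair-degrees do \emph{not} force distinct $\lambda_i$, since $\lambda_i=\deg(u_i)+A_{u_i,v_i}$ --- a non-adjacent pair of degree $3$ and an adjacent pair of degree $2$ both yield $\lambda=3$. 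The paper closes the gap by exhibiting explicit witnesses: the $8$-vertex graph of Figure~\ref{fig:sample-suc} with twin pairs $\{v_1,v_2\},\{v_3,v_4\},\{v_5,v_6\}$, and the $11$-vertex graph with five twin pairs, each verified to have simple Laplacian spectrum --- a finite check. To complete your proof you would need to do the same: produce and verify one such example for each parity, rather than postulating quotients whose spectra ``steer clear'' of the prescribed integers.
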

\begin{proof}
To prove (i), suppose that $\deg(u)=\deg(v)=n-1$ for a twin pair $\{u,v\}\subset V$.  Then by Proposition~\ref{prop:twin-eig}, $\lambda=n$ is an eigenvalue of $\lap$ with Faria eigenvector $\vv=\ev_{u}-\ev_{v}$.  On the other hand, it is straightforward to verify that $\tilde{\vv} =  -e + n e_u $ is also an eigenvector of $\lap$ affording the eigenvalue $\lambda=n$.  Thus, $\lambda = n$ has algebraic multiplicity at least $2$ which contradicts the assumption that $L$ has simple spectrum.  Hence, we must have $\deg(u)=\deg(v)<n-1$.

To prove (ii), we first recall that if $G$ has an automorphism of order $k\geq 3$ then $L$ has a repeated eigenvalue \cite[pg. 45]{DC-PR-SS:97}.  Now, if $G$ has more than $\frac{n}{2}$ twin pairs of vertices then by the pigeon-hole principle, there are at least two sets of twin vertices of the form $\{u,v\}$ and $\{u,w\}$ with $v\neq w$.  Then the cyclic automorphism $\sigma(u)=v$, $\sigma(v)=w$, and $\sigma(w)=u$ (and all other vertices held fixed) clearly has order three and consequently $L$ does not have simple spectrum.  Suppose now then that $G$ has exactly $\frac{n}{2}$ twins and that $L$ has simple spectrum.  We claim that every eigenvalue induced by a twin is even.  To see this, if $\{u,v\}$ are twins then $\deg(u)=2q_u+A_{u,v}$ where $q_u$ is the number of twins $u$ (and hence $v$) is adjacent to.  By Proposition~\ref{prop:twin-eig},  the eigenvalue induced by $\{u,v\}$ is $\lambda = \deg(u) + A_{u,v} = 2(q_u+A_{u,v})$, which is even, and proves the claim.  Since $L$ has simple spectrum and all the eigenvalues induced by the $\frac{n}{2}$ twins are even, it follows that $n$ is an eigenvalue of $L$. Thus, there exists a twin pair each of which has degree $n-1$ which contradicts part (i).  Hence, this proves that no simple graph with Laplacian simple eigenvalues has $\frac{n}{2}$ or more twin vertices.  To prove that $\frac{n}{2}-1$ is a sharp bound for the number of twins, it may be verified that the graph on $n=8$ vertices in Figure~\ref{fig:sample-suc} has simple Laplacian spectrum and has $k=\frac{n}{2}-1=3$ twins given by $\{v_1,v_2\}$, $\{v_3,v_4\}$, and $\{v_5,v_6\}$.  

The proof of (iii) is similar to (ii) and is omitted.  In this case, the bound $t=\frac{n-1}{2}$ is attained by the graph on $n=11$ vertices in Figure~\ref{fig:sample-suc} which has simple Laplacian spectrum and has $k=\frac{n-1}{2}=5$ twins given by $\{v_1,v_2\}$, $\{v_3,v_4\}$, $\{v_5,v_6\}$, $\{v_7,v_8\}$, and $\{v_9,v_{10}\}$.  
\end{proof}


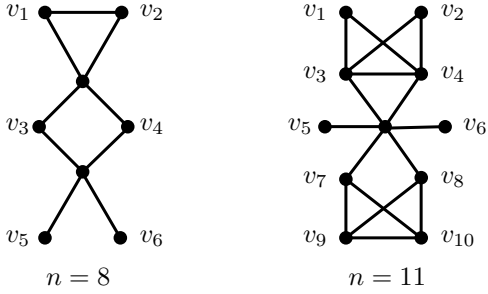
\begin{figure}
\centering
\begin{pspicture}(5,4)
\psset{xunit=1cm,yunit=1cm}

\rput[c](-0.5,2){
\psdots[dotsize=0.18](0.14,1.8037109)
\psdots[dotsize=0.18](1.16,1.8037109)
\psdots[dotsize=0.18](0.64,0.8837109)
\psdots[dotsize=0.18](1.14,-1.1962891)
\psdots[dotsize=0.18](0.14,-1.1962891)
\psdots[dotsize=0.18](0.64,-0.31628907)
\psdots[dotsize=0.18](1.24,0.28371093)
\psdots[dotsize=0.18](0.06,0.28371093)
\psline[linewidth=0.04cm](0.14,1.8037109)(1.16,1.8037109)
\psline[linewidth=0.04cm](0.14,1.783711)(0.66,0.86371094)
\psline[linewidth=0.04cm](1.16,1.783711)(0.64,0.8837109)
\psline[linewidth=0.04cm](0.04,0.28371093)(0.64,-0.31628907)
\psline[linewidth=0.04cm](1.24,0.26371095)(0.64,-0.31628907)
\psline[linewidth=0.04cm](0.66,0.86371094)(1.24,0.28371093)
\psline[linewidth=0.04cm](0.06,0.28371093)(0.66,0.8837109)
\psline[linewidth=0.04cm](0.64,-0.33628905)(0.14,-1.1962891)
\psline[linewidth=0.04cm](0.64,-0.33628905)(1.14,-1.1962891)
\rput(0.58145505,-1.711289){$n=8$}
\rput[r](-0.05,1.8037109){$v_1$}\rput[l](1.4,1.8037109){$v_2$}
\rput[r](-0.05,0.28371093){$v_3$}\rput[l](1.4,0.28371093){$v_4$}
\rput[r](-0.05,-1.1962891){$v_5$}\rput[l](1.4,-1.1962891){$v_6$}
}
\rput[c](-2.5,2){
\psdots[dotsize=0.18](6.14,1.8037109)
\psdots[dotsize=0.18](7.14,1.8037109)
\psdots[dotsize=0.18](7.14,0.98371094)
\psdots[dotsize=0.18](6.14,0.98371094)
\psdots[dotsize=0.18](6.66,0.28371093)
\psdots[dotsize=0.18](6.14,-1.1962891)
\psdots[dotsize=0.18](7.14,-1.1962891)
\psdots[dotsize=0.18](7.14,-0.39628905)
\psdots[dotsize=0.18](6.14,-0.41628906)
\psdots[dotsize=0.18](5.86,0.28371093)
\psdots[dotsize=0.18](7.46,0.28371093)
\rput[r](5.9,1.8037109){$v_1$}\rput[l](7.4,1.8037109){$v_2$}
\rput[r](5.9,0.98371094){$v_3$}\rput[l](7.4,0.98371094){$v_4$}
\rput[r](5.7,0.28371093){$v_5$}\rput[l](7.7,0.28371093){$v_6$}
\rput[r](5.9,-0.41628906){$v_7$}\rput[l](7.4,-0.41628906){$v_{8}$}
\rput[r](5.9,-1.1962891){$v_9$}\rput[l](7.4,-1.1962891){$v_{10}$}
\psline[linewidth=0.04cm](6.66,0.28371093)(5.86,0.28371093)
\psline[linewidth=0.04cm](6.68,0.26371095)(7.46,0.28371093)
\psline[linewidth=0.04cm](6.14,-1.1962891)(7.16,-1.1962891)
\psline[linewidth=0.04cm](6.14,1.8037109)(7.14,0.98371094)
\psline[linewidth=0.04cm](7.14,1.8037109)(7.14,0.98371094)
\psline[linewidth=0.04cm](6.14,0.98371094)(7.14,0.98371094)
\psline[linewidth=0.04cm](6.14,1.8037109)(6.14,0.98371094)
\psline[linewidth=0.04cm](7.14,1.8037109)(6.14,1.003711)
\psline[linewidth=0.04cm](7.14,0.98371094)(6.66,0.28371093)
\psline[linewidth=0.04cm](6.14,0.98371094)(6.66,0.28371093)
\psline[linewidth=0.04cm](6.66,0.26371095)(6.14,-0.41628906)
\psline[linewidth=0.04cm](6.66,0.28371093)(7.14,-0.39628905)
\psline[linewidth=0.04cm](6.14,-0.43628907)(6.14,-1.1962891)
\psline[linewidth=0.04cm](7.14,-0.41628906)(7.14,-1.1962891)
\psline[linewidth=0.04cm](7.14,-0.41628906)(6.14,-1.1962891)
\psline[linewidth=0.04cm](6.14,-0.41628906)(7.14,-1.1962891)
\rput(6.691455,-1.711289){$n=11$}
}
\end{pspicture} 
\caption{A sample of graphs with maximal twins and simple Laplacian eigenvalues}\label{fig:sample-suc}
\end{figure}

In view of Lemma~\ref{lem:max-twins}, we make the following definition.
\begin{definition}
A simple graph $G$ is said to be a \textit{twin graph} if $L=L(G)$ has simple spectrum and $G$ contains the maximal number of twins $t=\left\lfloor \frac{n-1}{2}\right\rfloor$.
\end{definition}
Let $w(n)$ be the number of twin graphs on $n$ vertices.  We have verified numerically that $w(n)=0$ for $n\leq 6$ and that $w(7) = 12$, $w(8) = 36$, $w(9) = 42$, and $w(10) = 924$.  We conjecture that twin graphs exists for all $n\geq 7$.  

If $G$ is a twin graph with vertex set $V=\{v_1,v_2,\ldots,v_n\}$, and $n$ is even, we assume that the $t=\frac{n}{2}-1$ twins are $C_1=\{v_1,v_2\}, C_2=\{v_3,v_4\},\ldots,C_t=\{v_{n-3},v_{n-2}\}$ and that $v_{n-1}$ and $v_n$ are the non-twin vertices.  In this case, we call $\ptw=\{C_1,C_2,\ldots,C_t,\{v_{n-1}\},\{v_n\}\}$ the \textit{twin partition} of $G$.  If $n$ is odd then we may assume that the $t=\frac{n-1}{2}$ twins are $C_1=\{v_1,v_2\},C_2=\{v_3,v_4\},\ldots, C_t=\{v_{n-2},v_{n-1}\}$, and $v_n$ will denote the non-twin vertex.  In this case, $\ptw=\{C_1,C_2,\ldots,C_t,\{v_n\}\}$ is the twin partition.  In either case, $\ptw$ is an equitable partition of $G$. 

We end this section with a discussion on the \textit{minimal controllability problem} for twin graphs \cite{AO:14}.  Given a linear time-invariant system $\dot{x} = Mx$ on $\real^n$, the minimal controllability problem is to find a smallest subset $\mathcal{I}=\{i_1,\ldots,i_p\}\subset \{1,2,\ldots,n\}$ such that if $B=\begin{bmatrix}e_{i_1}&\cdots&e_{i_p}\end{bmatrix}\in\real^{n\times p}$ then the linear time-invariant control system $\dot{x} = Mx + Bu$ is controllable.  It was proved in \cite{AO:14} that finding such a smallest $\mathcal{I}$ within a multiplicative factor of $c\log(n)$ is NP-hard for some absolute constant $c>0$, even when $M$ is symmetric.  It is known that if the maximum geometric multiplicity of the eigenvalues of $M$ is $q$ then $\rank(B) \geq q$ whenever $(M,B)$ is a controllable pair \cite[pg. 95]{ES:98}.  The existence of twin graphs shows that $q$ is in general a very poor lower bound for $\rank(B)=p$.  Indeed, if $G$ is a twin graph with twin cells $C_1,C_2,\ldots,C_t$ then any subset $\mathcal{I}$ chosen as input nodes such that $\mathcal{I}\cap C_i=\emptyset$ renders $(L,B)$ uncontrollable; this follows easily since the Faria eigenvector associated to $C_i$ is clearly orthogonal to every column of $B$.  As a consequence we obtain the following.
\begin{theorem}\label{thm:twin-mcp}
Let $G=(V,E)$ be a $n$-vertex twin graph with Laplacian matrix $L$ and let $t=\left \lfloor \frac{n-1}{2} \right \rfloor$ denote the number of twins in $G$.  Let $\mathcal{I}=\{i_1,\ldots,i_p\}\subset \{1,2,\ldots,n\}$ and let $B=\begin{bmatrix}e_{i_1}&\cdots&e_{i_p}\end{bmatrix}\in\real^{n\times p}$.  If $(L,B)$ is controllable then $p \geq t$.  
\end{theorem}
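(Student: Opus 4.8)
The plan is to argue the contrapositive via the PBH eigenvector test, using the Faria eigenvectors supplied by Proposition~\ref{prop:twin-eig}. First I would record the multi-input form of the test: since $L$ is symmetric, its left and right eigenvectors coincide, so $(L,B)$ is controllable if and only if $B^T\xi \neq 0$ for every eigenvector $\xi$ of $L$. Equivalently, to certify \emph{un}controllability it suffices to exhibit a single eigenvector of $L$ that is orthogonal to every column of $B$, i.e.\ to every $e_{i_k}$ with $i_k \in \mathcal{I}$.

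Next I would assemble the relevant eigenvectors. By definition of a twin graph, $G$ has $t = \lfloor (n-1)/2 \rfloor$ twin cells $C_1,\ldots,C_t$, each a pair $C_i = \{u_i, v_i\}$, and these cells are pairwise disjoint: the simple-spectrum hypothesis rules out a shared twin, since a vertex twinned with two distinct vertices would force an order-three automorphism and hence a repeated eigenvalue, exactly as in the proof of Lemma~\ref{lem:max-twins}. Proposition~\ref{prop:twin-eig} then gives, for each $i$, a Faria eigenvector $x_i = e_{u_i} - e_{v_i}$ of $L$ whose support is precisely $C_i$.

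The crux is a pigeonhole count. I would observe that for any index $j \in \{1,\ldots,n\}$ one has $\langle x_i, e_j \rangle = 0$ unless $j \in C_i$; consequently $B^T x_i = 0$ exactly when $\mathcal{I} \cap C_i = \emptyset$. Thus, for $(L,B)$ to be controllable, the PBH test forces $\mathcal{I} \cap C_i \neq \emptyset$ for every $i = 1,\ldots,t$. Since the $t$ cells $C_1,\ldots,C_t$ are pairwise disjoint, a set $\mathcal{I}$ meeting all of them must contain at least one index from each, whence $p = |\mathcal{I}| \geq t$.

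I do not anticipate a genuine obstacle here; the statement is essentially a corollary of Proposition~\ref{prop:twin-eig} together with the disjointness of the twin cells. The only point requiring a little care is the disjointness itself, which is not part of the bare definition of twins but follows from the simple-spectrum requirement built into the definition of a twin graph; I would justify it by citing the order-three-automorphism argument already used in the proof of Lemma~\ref{lem:max-twins}.
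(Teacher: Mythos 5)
Your proof is correct and is essentially the paper's own argument: the paper likewise certifies uncontrollability via the PBH test by noting that if $\mathcal{I}\cap C_i=\emptyset$ then the Faria eigenvector of the twin cell $C_i$ is orthogonal to every column of $B$, so controllability forces $\mathcal{I}$ to meet each of the $t$ pairwise disjoint twin cells, giving $p\geq t$. Your added justification that the cells are disjoint (via the order-three automorphism argument from Lemma~\ref{lem:max-twins}) is a point the paper leaves implicit in its twin-partition setup, but it does not change the approach.
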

The punchline of Theorem~\ref{thm:twin-mcp} is that, at least for consensus-type dynamics, the generic case of simple eigenvalues in a graph matrix does not eliminate the necessity of controlling a significant fraction of the nodes to achieve network controllability.

\section{Strongly Uncontrollable Graphs}\label{sec:main}
In this section we present the main result of this paper.  For completeness, we formally state the definition of a strongly uncontrollable graph.

\begin{definition}
Let $G$ be a simple connected graph and suppose that $L=L(G)$ has simple spectrum.  We say that $G$ is \textit{strongly uncontrollable} if the pair $(L, b)$ is uncontrollable for every $b\in \{0,1\}^n$.
\end{definition}
Strong uncontrollability can also be defined using the adjacency or signless Laplacian matrix, etc.  However, the Laplacian matrix has the key property that it has the all ones vector $e$ as an eigenvector with corresponding eigenvalue $\lambda = 0$, and therefore $(L, b)$ is uncontrollable if and only if $(L, e-b)$ is uncontrollable (provided $n\geq 2$).  Also, as will be seen below, our sufficient condition for strong uncontrollability relies heavily on the orthogonality of eigenvectors of $L$ with the eigenvector $e$.  

Although a twin graph $G$ has many Faria eigenvectors, it is not necessarily a strongly uncontrollable graph.  In fact, we have verified that all twin graphs on $n=7$ vertices are not strongly uncontrollable and only 10 of the $w(8)=36$ twin graphs on $n=8$ vertices are strongly uncontrollable.  Below we give a sufficient condition for strong uncontrollability of twin graphs in terms of AEPs of the quotient graph $G_{\ptw}$.

\begin{theorem}\label{thm:suc-suff-even}
Let $G$ be a twin graph and suppose that $n=|V(G)|$ is even.  Let $\ptw=\{C_1,C_2,\ldots,C_t,\{v_{n-1}\},\{v_n\}\}$ denote the twin partition of $G$.  If the quotient graph $G_{\ptw}$ has an AEP of the form $\rho=\{S_1,S_2,\ldots,S_{{m}}, \{\{v_{n-1}\},\{v_n\}\}\}$ then $G$ is strongly uncontrollable.
\end{theorem}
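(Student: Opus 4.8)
The plan is to verify uncontrollability through the PBH test: since $L$ is symmetric, $(L,b)$ is uncontrollable precisely when some eigenvector $\xi$ of $L$ satisfies $\dotprod{\xi,b}=0$, so it suffices to exhibit such a $\xi$ for every $b\in\{0,1\}^n$. The hypotheses provide two reservoirs of eigenvectors. Because $\ptw$ is an AEP of $G$ and $L$ is symmetric, Remark~\ref{rmk:eig-struct} partitions the eigenvectors of $L$ into the Faria eigenvectors $\ev_{v_{2i-1}}-\ev_{v_{2i}}$ spanning $\ker(P_{\ptw}^T)$ and the eigenvectors lying in $\img(P_{\ptw})$, the latter being exactly the vectors $P_{\ptw}y$ with $y$ an eigenvector of $L_{\ptw}$ (Theorem~\ref{thm:equi-lap}). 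Since $L$ has simple spectrum, $L_{\ptw}$ has $t+2$ distinct real eigenvalues, hence is diagonalizable; consequently every $L_{\ptw}$-invariant subspace is spanned by eigenvectors of $L_{\ptw}$, and likewise for the Laplacian of the merged quotient introduced below.

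The easy case is when some twin pair $C_i=\{v_{2i-1},v_{2i}\}$ has $b_{2i-1}=b_{2i}$: the Faria eigenvector of $C_i$ is then orthogonal to $b$ and we are done. I therefore reduce to the case in which $b$ takes each of the values $0,1$ exactly once on every twin pair. The structural engine is the observation that the hypothesized $\rho$ is automatically $\ptw$-regular, since its cells $S_1,\dots,S_m$ group twin cells (all of cardinality two) while its last cell groups the two singletons (both of cardinality one). Lemma~\ref{lem:pi-regular} then makes both $\img(P_\rho)$ and $\ker(P_\rho^T)$ invariant under $L_{\ptw}$, and by \cite[Prop.\ 1]{CA-BG:17} the $\rho$-merge $\ptw_\rho$ is an AEP of $G$, whose cells are the merged twin blocks together with the single block $\{v_{n-1},v_n\}$.

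I would finish by splitting on the two non-twin vertices. If $b_{n-1}=b_n$, choose any eigenvector $y$ of $L_{\ptw}$ inside the nonzero (since $m\le t$) invariant subspace $\ker(P_\rho^T)$ and set $\xi=P_{\ptw}y$. Because $y$ sums to zero on each cell of $\rho$, its entries over the twin cells sum to zero and its two singleton entries are opposite, so $\dotprod{\xi,b}$ collapses to a multiple of $b_{n-1}-b_n=0$. If instead $b_{n-1}\neq b_n$, pass to the merged quotient: take an eigenvector $z$ of $L_{\ptw_\rho}$ with nonzero eigenvalue (one exists since $m\ge 1$) and set $\xi=P_{\ptw_\rho}z$. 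Here each merged twin block contributes its number of twin pairs $\lvert S_\ell\rvert$ and the block $\{v_{n-1},v_n\}$ contributes $b_{n-1}+b_n=1$, giving $\dotprod{\xi,b}=w^Tz$ with $w$ the vector of cell counts $(\lvert S_1\rvert,\dots,\lvert S_m\rvert,1)$. The crux — and the step I expect to be the main obstacle — is to recognize that $w$ is, up to scale, the \emph{left} eigenvector of the non-symmetric matrix $L_{\ptw_\rho}$ for the eigenvalue $0$. This follows because $P_{\ptw_\rho}^TP_{\ptw_\rho}=2\diag(w)$ (each merged block has twice the mass recorded by $w$) and because symmetrizing $L_\pi=(P^TP)^{-1}P^TLP$ by $(P^TP)^{1/2}$ shows its left eigenvectors are $(P^TP)$ times its right eigenvectors; applied to the right $0$-eigenvector $\bs 1$ this yields $w\propto (P_{\ptw_\rho}^TP_{\ptw_\rho})\bs 1$. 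Orthogonality of left and right eigenvectors belonging to distinct eigenvalues then gives $w^Tz=0$, so $\dotprod{\xi,b}=0$.

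In all cases an eigenvector of $L$ orthogonal to $b$ has been produced, so $(L,b)$ is uncontrollable for every $b\in\{0,1\}^n$ and $G$ is strongly uncontrollable. The only genuinely delicate point is the branch $b_{n-1}\neq b_n$, where one must identify the size vector of the merged cells with the zero-eigenvalue left eigenvector of the quotient Laplacian; the other branches reduce to bookkeeping with the defining sum conditions of $\ker(P_\rho^T)$ and with the Faria vectors.
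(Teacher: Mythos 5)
Your proposal is correct, and its skeleton coincides with the paper's proof: PBH test, reduction via Faria eigenvectors to vectors $b$ with exactly one nonzero entry per twin pair, the observation that $\rho$ is automatically $\ptw$-regular so that Lemma~\ref{lem:pi-regular} applies, and a case split on $b_{n-1}$ versus $b_n$. Your $b_{n-1}=b_n$ branch is the paper's argument verbatim (an eigenvector $P_{\ptw}y$ with $y\in\ker(P_\rho^T)$, killed by the zero-sum conditions). The one genuine divergence is the branch $b_{n-1}\neq b_n$, which you flag as the crux: you pass to the merged quotient $L_{\ptw_\rho}$, take a right eigenvector $z$ with nonzero eigenvalue, and prove $w^Tz=0$ by identifying $w=(|S_1|,\ldots,|S_m|,1)$ as the left $0$-eigenvector via the symmetrization $(P^TP)^{1/2}L_{\ptw_\rho}(P^TP)^{-1/2}$. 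This is valid, but the paper gets the same relation in one line: it takes an eigenvector $\tilde y\neq e$ of $L_{\ptw}$ inside $\img(P_\rho)$ (existence from the $L_{\ptw}$-invariance of $\img(P_\rho)$ and diagonalizability), pulls back to $\tilde x=P_{\ptw}\tilde y$, and uses that $\tilde x$ is an eigenvector of the \emph{symmetric} matrix $L$ for a nonzero eigenvalue, hence $\langle\tilde x,e\rangle=0$, which is exactly $\beta+\sum_j|S_j|\mu_j=0$. Since $P_{\ptw}P_\rho=P_{\ptw_\rho}$, your $\xi=P_{\ptw_\rho}z$ is the same eigenvector, and your orthogonality is the same fact in disguise: $w^Tz=\tfrac12 z^T(P_{\ptw_\rho}^TP_{\ptw_\rho})e=\tfrac12\langle P_{\ptw_\rho}z,\,e\rangle=0$. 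So the step you expected to be the main obstacle dissolves once one remembers the ambient $L$ is symmetric; your heavier route does buy something, namely a principle (cell-size vectors are left $0$-eigenvectors of quotient Laplacians) that would survive in settings where one cannot retreat to a symmetric ambient matrix. One last remark: the paper also notes that some $|S_j|\geq 2$, else $x$ would reduce to $\alpha(e_{n-1}-e_n)$, making $\{v_{n-1},v_n\}$ a twin pair and contradicting maximality of $t$; your proof does not need this consistency check, since your case-1 eigenvector is orthogonal to $b$ regardless of how the $S_j$ decompose.
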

\begin{proof}
By the PBH eigenvector controllability test, we must show that each binary vector $b\in\{0,1\}^n$ is orthogonal to some eigenvector of $L$.  If $b$ is a binary vector such that $b_{u} = b_{v}$ where $\{u,v\}=C_i$ for some $i\in\{1,2,\ldots,t\}$, then $b$ is orthogonal to the Faria eigenvector $e_u - e_v$, and thus a necessary condition for controllability is that $b_u \neq b_v$ for all twin pairs $\{u,v\}$.  To prove the theorem we will show the existence of two eigenvectors $\vv$ and $\tilde{\vv}$ of $L$ that are orthogonal to the binary vectors that have exactly one non-zero entry on each twin cell of $\ptw$.  To that end, let $P_{\ptw}$ be the characteristic matrix of the twin partition $\ptw$ and let $\xi_i$ be the characteristic vector of the cell $C_i$ for $i=1,2,\ldots,t$.  Then clearly
\[
\img(P_{\pi^*}) = \left\{\alpha\ev_{n-1}+\beta\ev_n + \sum_{i=1}^t \gamma_i \xi_i\;:\; \alpha,\beta,\gamma_i\in\real\right\}.
\]
Since $\{S_1,\ldots,S_{m}\}$ is a partition of the set $\{C_1,C_2,\ldots,C_t\}$, we can write  for each $j\in\{1,2,\ldots,{m}\}$ that $S_j = \{ C_{j,1}, C_{j,2},\ldots, C_{j,|S_j|}\}$,  where $C_{j,i}\in \{C_1,C_2,\ldots, C_t\}$ for all $i\in\{1,2,\ldots,|S_j|\}$.  We may therefore write an arbitrary vector in $\img(P_{\pi^*})$ in the form
\[
\alpha\ev_{n-1}+\beta\ev_n + \sum_{j=1}^{m} \sum_{i=1}^{|S_j|} \gamma_{j,i} \, \xi_{j,i}
\]  
where $\xi_{j,i}$ is the characteristic vector of the cell $C_{j,i}\in S_j$, and $\alpha,\beta,\gamma_{j,i}\in\real$.  

The partition $\rho=\{S_1,S_2,\ldots,S_{{m}},\{\{v_{n-1}\},\{v_n\}\}\}$ is $\ptw$-regular; indeed, each set $S_j$ contains cells of $\pi$ that have cardinality two (twin cells) and the set $\{\{v_{n-1}\},\{v_n\}\}$ clearly consists of cells of $\pi^*$ of the same cardinality.  Thus by Lemma~\ref{lem:pi-regular}, there exists an eigenvector $\ez$ of $\lap(G_{\ptw})$ such that $P_\rho^T y = 0$.  Therefore, the eigenvector $\vv=P_{\pi^*}\ez$ of $L $ can be written in the form
\[
\vv = \alpha(\ev_{n-1}-\ev_n) + \sum_{j=1}^{m} \sum_{i=1}^{|S_j|} \gamma_{j,i}\, \xi_{j,i}
\]
where for each $j\in\{1,2,\ldots,{m}\}$ we have 
\begin{equation}\label{eqn:alpha_ij}
\sum_{i=1}^{|S_j|}\gamma_{j,i}=0.
\end{equation}
We note that there exists at least one  $j\in\{1,2,\ldots,{m}\}$ such that $|S_j|\geq 2$ (i.e., at least one set $S_j$ is not a singleton cell); if not then $\gamma_{j,i}=0$ for all $i,j$ and therefore $\vv=\alpha(\ev_{n-1}-\ev_n)$ which implies that $\{v_{n-1},v_n\}$ is a twin cell of $G$ which contradicts the maximality of $t$.

Since $\rho$ is an AEP of $G_{\pi^*}$, there exists an eigenvector $\tilde{\ez}\neq\ones$ of $\lap(G_{\ptw})$ such that $\tilde{y}\in\img(P_\rho)$.  Hence, the eigenvector $\tilde{\vv}=P_{\pi^*}\tilde{\ez}$ of $L $ takes the form
\[
\tilde{\vv} = \beta(\ev_{n-1}+\ev_n) + \sum_{j=1}^{m} \mu_j \sum_{i=1}^{|S_j|} \xi_{j,i}
\]
for $\mu_j\in\real$.  Since $\tilde{\vv}$ is orthogonal to the all ones eigenvector of $L$, and since each vector $\xi_{j,i}$ is a sum of two distinct standard basis vectors, we have 
\[
0 = \langle\tilde{\vv},\ones\rangle = 2\beta+\sum_{j=1}^{{m}} 2 |S_j| \mu_j
\]
which simplifies to
\begin{equation}\label{eqn:beta}
\beta+\sum_{j=1}^{{m}}  |S_j| \mu_j=0.
\end{equation}

Now let $\bv\in\{0,1\}^n$.  As already mentioned at the beginning of the proof, we need only consider the case that $b$ has exactly one non-zero entry on each twin cell.  Hence, $\langle\bv,\xi_{j,i}\rangle = 1$ for all $i\in\{1,\ldots,|S_j|\}$ and all $j\in\{1,\ldots,{m}\}$.  There are three cases to consider.  If $b_{n-1}=b_n=1$ then from \eqref{eqn:alpha_ij} we have
\[
\langle \vv,\bv\rangle = \alpha-\alpha + \sum_{j=1}^{m} \sum_{i=1}^{|S_j|} \gamma_{j,i} =0.
\]
If $b_{n-1}=b_n=0$ then again from \eqref{eqn:alpha_ij} we have
\[
\langle \vv,\bv\rangle =  \sum_{j=1}^{m} \sum_{i=1}^{|S_j|} \gamma_{j,i} =0.
\]
Lastly, if $b_{n-1}\neq b_n$  then from \eqref{eqn:beta} we have
\[
\langle \tilde{\vv},\bv\rangle = \beta + \sum_{j=1}^{m} \mu_j |S_j| = 0.
\]
Thus, in any case, $\bv$ is orthogonal to an eigenvector of $L $ and thus $(L ,\bv)$ is uncontrollable.  Since $G$ has simple eigenvalues, by definition $G$ is a strongly uncontrollable graph.
\end{proof}

The case of an odd number of vertices is similar.
\begin{theorem}\label{thm:suc-suff-odd}
Let $G$ be a twin graph and suppose that $n=|V(G)|$ is odd.  Let $\ptw=\{C_1,C_2,\ldots,C_t,\{v_n\}\}$ denote the twin partition of $G$.  If $G_{\ptw}$ has a non-trivial AEP of the form $\rho=\{S_1,S_2,\ldots,S_{{m}}, \{ \{v_n\}\}\}$ then $G$ is strongly uncontrollable.  
\end{theorem}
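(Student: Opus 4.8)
The plan is to follow the architecture of the proof of Theorem~\ref{thm:suc-suff-even}, but to exploit the fact that in the odd case there is only a single non-twin vertex $v_n$; this collapse turns out to make the argument strictly shorter. By the PBH test it suffices to produce, for every $b\in\{0,1\}^n$, an eigenvector of $L$ orthogonal to $b$. As in the even case, if $b$ takes equal values on some twin pair $\{u,v\}=C_i$, then $b$ is orthogonal to the Faria eigenvector $e_u-e_v$; hence I may assume throughout that $b$ has exactly one nonzero entry on each twin cell, so that $\langle b,\xi_i\rangle=1$ for every twin-cell characteristic vector $\xi_i$.

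First I would verify that $\rho$ is $\ptw$-regular: every $S_j$ is a collection of twin cells, all of cardinality two, while the cell $\{\{v_n\}\}$ is a singleton and so trivially satisfies the regularity requirement. Lemma~\ref{lem:pi-regular} then guarantees that $\ker(P_\rho^T)$ is $L_{\ptw}$-invariant. Because $\rho$ is assumed non-trivial, at least one $S_j$ has $|S_j|\geq 2$, so $m<t$ and therefore $\dim\ker(P_\rho^T)=(t+1)-(m+1)=t-m>0$. Consequently there exists a nonzero eigenvector $y$ of $L_{\ptw}$ lying in $\ker(P_\rho^T)$.

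The decisive step is to read off what $P_\rho^T y=0$ means coordinate by coordinate. For each $j$ it yields $\sum_{C_i\in S_j}y_i=0$, and for the singleton cell $\{\{v_n\}\}$ it forces the $v_n$-coordinate of $y$ to be zero. Pulling back through $x=P_{\ptw}y$, which is an eigenvector of $L$ by the quotient eigenvalue/eigenvector correspondence, I obtain an eigenvector that is constant on each twin cell, \emph{vanishes at $v_n$}, and obeys $\sum_{i=1}^{|S_j|}\gamma_{j,i}=0$ for each $j$, where $\gamma_{j,i}$ denotes the common value of $x$ on the twin cell $C_{j,i}\in S_j$. Since $x$ is supported entirely on twin cells, for any admissible $b$ the computation
\[
\langle x,b\rangle=\sum_{j=1}^{m}\sum_{i=1}^{|S_j|}\gamma_{j,i}\,\langle\xi_{j,i},b\rangle=\sum_{j=1}^{m}\Bigl(\sum_{i=1}^{|S_j|}\gamma_{j,i}\Bigr)=0
\]
holds regardless of whether $b_n=0$ or $b_n=1$.

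I expect the only real subtlety to be precisely this vanishing of the $v_n$-coordinate, and it is exactly here that the odd case diverges from, and is easier than, the even case. In Theorem~\ref{thm:suc-suff-even} the analogous pulled-back eigenvector retained a term $\alpha(e_{n-1}-e_n)$, which failed to annihilate $b$ when $b_{n-1}\neq b_n$ and thereby forced the construction of a second eigenvector $\tilde{x}\in\img(P_\rho)$. Here the singleton cell of $\rho$ contains the \emph{single} cell $\{v_n\}$ rather than a pair, so the same orthogonality constraint drives $\alpha$ to zero and a lone eigenvector settles both parities of $b_n$ at once; no companion eigenvector is required. The last routine checks are that $x\neq 0$, which holds since $y\neq 0$ and $P_{\ptw}$ has full column rank, and that $x$ is distinct from the all-ones eigenvector $e$, which is immediate because $x$ vanishes at $v_n$ while $e$ does not.
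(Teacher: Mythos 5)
Your proof is correct and is exactly the adaptation of the proof of Theorem~\ref{thm:suc-suff-even} that the paper intends when it says ``the case of an odd number of vertices is similar'': you check $\ptw$-regularity of $\rho$, invoke Lemma~\ref{lem:pi-regular} to obtain an eigenvector $y\in\ker(P_\rho^T)$ of $L_{\ptw}$, and pull it back via $x=P_{\ptw}y$. You also correctly pinpoint the two places where the odd case differs: the non-triviality hypothesis is what guarantees $\dim\ker(P_\rho^T)=t-m>0$ (in the even case the analogous fact was derived from twin maximality rather than assumed), and the singleton cell $\{\{v_n\}\}$ forces $x_n=0$, so one eigenvector handles both values of $b_n$ and no companion eigenvector $\tilde{x}\in\img(P_\rho)$ is needed.
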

%
We illustrate the previous results with an example.
\begin{example}
The twin graph $G$ on $n=11$ vertices shown in Figure~\ref{fig:quotient-suc} satisfies the assumptions of Theorem~\ref{thm:suc-suff-odd}; we also display the quotient graph $G_{\ptw}$ where $\ptw=V(G_{\ptw}) = \{C_1,C_2,C_3,C_4,C_5,\{v_{11}\}\}$ is the twin partition of $G$.   The Faria eigenvector associated to the twin $C_i$ is $x_i=e_{2i-1}-e_{2i}$ for $i=1,2,\ldots,5$.    The adjacency matrix of $G_{\ptw}$ is
\[
A_{\ptw} = \begin{bmatrix}0&2&0&0&0&0\\2&0&0&0&0&1\\0&0&0&0&0&1\\0&0&0&0&2&1\\0&0&0&2&0&0\\0&2&2&2&0&0\end{bmatrix}
\]
One can verify by inspection that $\rho=\{\{C_1,C_5\},\{C_2,C_4\}, \{C_3\},\{v_{11}\}\}$ is an AEP of $G_{\ptw}$.  The $\rho$-merge of $\ptw$ is $\ptw_\rho=\{\{1,2,9,10\},\{3,4,7,8\},\{5,6\},\{11\}\}$ and the quotient graph of $G_{\ptw_\rho}$ has adjacency matrix
\[
A(G_{\ptw_\rho}) = \begin{bmatrix}0&2&0&0\\2&0&0&1\\0&0&0&1\\0&4&2&0\end{bmatrix}.
\]

\begin{figure}
\centering
\begin{pspicture}(8,6)
\psset{xunit=1.5cm,yunit=1.5cm}
\rput[c](-5.5,2){
\psdots[dotsize=0.18](6.14,1.8037109)\rput[r](6,1.8){$v_1$}
\psdots[dotsize=0.18](7.14,1.8037109)\rput[l](7.3,1.8){$v_2$}
\psdots[dotsize=0.18](7.14,0.98371094)\rput[r](6,0.98){$v_3$}
\psdots[dotsize=0.18](6.14,0.98371094)\rput[l](7.3,0.98){$v_4$}
\psdots[dotsize=0.18](6.66,0.28371093)\rput[r](5.7,0.28){$v_5$}
\psdots[dotsize=0.18](6.14,-1.1962891)\rput[l](7.6,0.28){$v_6$}
\psdots[dotsize=0.18](7.14,-1.1962891)\rput[r](6,-0.4){$v_7$}
\psdots[dotsize=0.18](7.14,-0.39628905)\rput[l](7.3,-0.4){$v_8$}
\psdots[dotsize=0.18](6.14,-0.41628906)\rput[r](6,-1.2){$v_9$}
\psdots[dotsize=0.18](5.86,0.28371093)\rput[l](7.3,-1.2){$v_{10}$}
\psdots[dotsize=0.18](7.46,0.28371093)\rput[c](6.7,-0.1){\small $v_{11}$}
\psline[linewidth=0.04cm](6.66,0.28371093)(5.86,0.28371093)
\psline[linewidth=0.04cm](6.68,0.26371095)(7.46,0.28371093)
\psline[linewidth=0.04cm](6.14,-1.1962891)(7.16,-1.1962891)
\psline[linewidth=0.04cm](6.14,1.8037109)(7.14,0.98371094)
\psline[linewidth=0.04cm](7.14,1.8037109)(7.14,0.98371094)
\psline[linewidth=0.04cm](6.14,0.98371094)(7.14,0.98371094)
\psline[linewidth=0.04cm](6.14,1.8037109)(6.14,0.98371094)
\psline[linewidth=0.04cm](7.14,1.8037109)(6.14,1.003711)
\psline[linewidth=0.04cm](7.14,0.98371094)(6.66,0.28371093)
\psline[linewidth=0.04cm](6.14,0.98371094)(6.66,0.28371093)
\psline[linewidth=0.04cm](6.66,0.26371095)(6.14,-0.41628906)
\psline[linewidth=0.04cm](6.66,0.28371093)(7.14,-0.39628905)
\psline[linewidth=0.04cm](6.14,-0.43628907)(6.14,-1.1962891)
\psline[linewidth=0.04cm](7.14,-0.41628906)(7.14,-1.1962891)
\psline[linewidth=0.04cm](7.14,-0.41628906)(6.14,-1.1962891)
\psline[linewidth=0.04cm](6.14,-0.41628906)(7.14,-1.1962891)
\rput(6.691455,-1.711289){$G$}
}
\rput[c](-2,2){
\psdots[dotsize=0.18](5.86,1.8037109)\rput[r](5.7,1.8){\small $C_1$}
\psdots[dotsize=0.18](5.86,0.98371094)\rput[r](5.7,0.98){\small $C_2$}
\psdots[dotsize=0.18](5.86,0.28371093)\rput[r](5.7,0.28){\small $\{v_{11}\}$}
\psdots[dotsize=0.18](6.66,0.28371093)\rput[l](6.8,0.28){\small $C_3$}
\psdots[dotsize=0.18](5.86,-0.41628906)\rput[r](5.7,-0.4){\small $C_4$}
\psdots[dotsize=0.18](5.86,-1.1962891)\rput[r](5.7,-1.2){\small $C_5$}
\psline[linewidth=0.04cm](5.86,-1.1962891)(5.86,1.8037109)
\psline[linewidth=0.04cm](5.86,0.28371093)(6.66,0.28371093)
\rput(5.86,-1.711289){$G_{\ptw}$}
\psline[linewidth=1pt,arrowsize=1pt 3]{->}(5.86,1.8037109)(5.86,1.1)\rput[l](5.95,1.2){\footnotesize $2$}
\psline[linewidth=1pt,arrowsize=1pt 3]{->}(5.86,0.983)(5.86,1.7)\rput[l](5.95,1.65){\footnotesize $2$}
\psline[linewidth=1pt,arrowsize=1pt 3]{->}(5.86,0.983)(5.86,0.4)\rput[l](5.95,0.8){\footnotesize $2$}
\psline[linewidth=1pt,arrowsize=1pt 3]{->}(5.86,0.283)(5.86,0.9)\rput[r](5.8,0.5){\footnotesize $1$}
\psline[linewidth=1pt,arrowsize=1pt 3]{->}(5.86,0.283)(6.55,0.283)\rput[r](6.55,0.42){\footnotesize $2$}
\psline[linewidth=1pt,arrowsize=1pt 3]{->}(6.66,0.283)(6,0.283)\rput[r](6.1,0.4){\footnotesize $1$}
\psline[linewidth=1pt,arrowsize=1pt 3]{->}(5.86,0.283)(5.86,-0.32)\rput[l](5.95,0.1){\footnotesize $1$}
\psline[linewidth=1pt,arrowsize=1pt 3]{->}(5.86,-0.416)(5.86,0.15)\rput[l](5.95,-0.23){\footnotesize $2$}
\psline[linewidth=1pt,arrowsize=1pt 3]{->}(5.86,-0.4)(5.86,-1.1)\rput[l](5.95,-0.6){\footnotesize $2$}
\psline[linewidth=1pt,arrowsize=1pt 3]{->}(5.86,-1.2)(5.86,-0.5)\rput[l](5.95,-1){\footnotesize $2$}
}
\end{pspicture} 
\caption{$G$ and $G_{\ptw}$ satisfying Theorem~\ref{thm:suc-suff-odd}}\label{fig:quotient-suc}
\end{figure}
\end{example}

Theorems~\ref{thm:suc-suff-even} and \ref{thm:suc-suff-odd} are not necessary for strong uncontrollability as will be seen in the next section where we consider the stability of strong uncontrollability to vertex additions.  Through numerical investigations, however, we have found that all strongly uncontrollable graphs up to $n=12$ vertices have at least three twin pairs.  For the adjacency matrix, twin vertices induce the eigenvalue $\lambda = 0$ or $\lambda = -1$ depending on whether the twin vertices are adjacent or not adjacent.  Thus, any graph with three twin vertices has an adjacency matrix with at least one repeated eigenvalue.  This observation leads us to conjecture that strongly uncontrollable graphs using the adjacency matrix $A$ do not exist.  As for the signless Laplacian matrix $Q=D+A$, we have not found any strongly uncontrollable graphs up to $n=10$ vertices.  Although our numerical investigations are for very small $n$, they suggest that strong uncontrollability is a property that may only exist in consensus-type network dynamics. 

\section{Uncontrollability under perturbations}\label{sec:perturbations}
In this section, we analyze the stability of the strong uncontrollability property to vertex additions.  Our results rely on the following well-known result concerning the Laplacian eigenvalues under the graph join operation.  Given two simple graphs $G_1=(V_1,E_1)$ and $G_2=(V_2,E_2)$ such that $V_1\cap V_2=\emptyset$, the \textit{join} of $G_1$ and $G_2$ is the graph $G=G_1\vee G_2$ with vertex set $V(G)=V_1\cup V_2$ and edge set $E(G)=E_1\cup E_2 \cup \{\{u,v\}\;|\; u\in V_1,\; v\in V_2\}$.
\begin{theorem}[\cite{TB-JL-PS:07}]\label{thm:join}
Let $G_1$ and let $G_2$ be simple graphs on disjoint sets of $n_1$ and $n_2$ vertices, respectively.  Let $\lap_1$ be the Laplacian matrix of $G_1$, with eigenvectors $\vv_1,\vv_2,\ldots,\vv_{n_1}$ and corresponding eigenvalues $0=\alpha_1\leq\alpha_2\leq\cdots\leq\alpha_{n_1}$.  Let $\lap_2$ be the Laplacian matrix of $G_2$, with eigenvectors $\ez_1,\ez_2,\ldots,\ez_{n_2}$ and corresponding eigenvalues $0=\beta_1\leq\beta_2\leq\cdots\leq\beta_{n_2}$.  Let $\lap$ be the Laplacian matrix of the join graph $G=G_1\vee G_2$.  The following hold:
\begin{compactenum}[(i)]
\item For all $i=2,\ldots,n_1$, $\begin{bmatrix}\vv_i^T & \zeros_{n_2}^T\end{bmatrix}^T$ is an eigenvector of $\lap$ with eigenvalue $n_2+\alpha_i$.
\item For all $j=2,\ldots,n_2$, $\begin{bmatrix}\zeros_{n_1}^T & \ez_j^T\end{bmatrix}^T$ is an eigenvector of $\lap$ with eigenvalue $n_1+\beta_j$.  
\item $\begin{bmatrix}n_2\vv_1^T & -n_1\ez_1^T\end{bmatrix}^T$ is an eigenvector of $\lap$ with eigenvalue $n_1+n_2$.
\end{compactenum}
\end{theorem}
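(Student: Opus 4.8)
The plan is to prove all three parts by directly verifying the eigenvalue equation $L\,v=\lambda v$, exploiting the block structure that the Laplacian inherits from the join. The first step is to record the block form of $L$ relative to the partition $V(G)=V_1\cup V_2$. Forming the join raises the degree of every vertex of $V_1$ by $n_2$ and every vertex of $V_2$ by $n_1$, and inserts exactly the all-ones adjacency blocks between the two parts. Writing $J_{p\times q}$ for the $p\times q$ all-ones matrix, this yields
\[
L = \begin{bmatrix} L_1 + n_2 I_{n_1} & -J_{n_1\times n_2} \\ -J_{n_2\times n_1} & L_2 + n_1 I_{n_2} \end{bmatrix}.
\]

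The computation hinges on two observations that I would isolate next. Since $\alpha_1=\beta_1=0$, I may take $x_1$ and $y_1$ to be the all-ones vectors of $\real^{n_1}$ and $\real^{n_2}$, so that $L_1 x_1=0$ and $L_2 y_1=0$. Moreover, because $L_1$ and $L_2$ are symmetric, eigenvectors belonging to distinct eigenvalues are orthogonal, so the entries of $x_i$ sum to zero for $i\geq 2$ and likewise for $y_j$ with $j\geq 2$. Consequently $J_{n_2\times n_1}x_i$ — each of whose entries equals the sum of the entries of $x_i$ — vanishes for $i\geq 2$, and symmetrically $J_{n_1\times n_2}y_j=0$ for $j\geq 2$; whereas for the index $1$ one has $J_{n_2\times n_1}x_1=n_1 y_1$ and $J_{n_1\times n_2}y_1=n_2 x_1$.

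With these in hand, parts (i) and (ii) are immediate. Applying $L$ to $[\,x_i^T\;0^T\,]^T$, the off-diagonal block contributes $-J_{n_2\times n_1}x_i=0$, while the diagonal block yields $(L_1+n_2 I)x_i=(\alpha_i+n_2)x_i$; the same computation with the roles of the two parts swapped handles $[\,0^T\;y_j^T\,]^T$. For part (iii) I would substitute the all-ones vectors $x_1,y_1$ into $[\,n_2 x_1^T\;-n_1 y_1^T\,]^T$ and evaluate each block of $L$ acting on it, using $L_1 x_1=L_2 y_1=0$ together with $J_{n_1\times n_2}y_1=n_2 x_1$ and $J_{n_2\times n_1}x_1=n_1 y_1$. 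The top block collapses to $(n_2^2+n_1 n_2)x_1=n_2(n_1+n_2)x_1$ and the bottom block to $-n_1(n_1+n_2)y_1$, which is exactly $(n_1+n_2)$ times the starting vector.

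The argument is elementary, and I anticipate no genuine obstacle; the only delicate point is the use of the orthogonality $\langle x_i,x_1\rangle=0$ (and its analogue for the $y_j$) to annihilate the cross-terms, which is precisely what forces the off-diagonal blocks to act as zero in parts (i) and (ii). For completeness I would remark — though the statement does not assert it — that the $n_1+n_2-1$ listed vectors, together with the all-ones vector of $G$ affording eigenvalue $0$, form a full orthogonal eigenbasis of $L$, so the entire spectrum is accounted for.
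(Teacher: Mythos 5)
The paper itself gives no proof of Theorem~\ref{thm:join}; it is quoted from \cite{TB-JL-PS:07}, so there is nothing internal to compare against. Your blind verification is the standard argument and it is essentially correct: the block form
\[
\lap = \begin{bmatrix} \lap_1 + n_2 I_{n_1} & -J_{n_1\times n_2} \\ -J_{n_2\times n_1} & \lap_2 + n_1 I_{n_2} \end{bmatrix}
\]
is right, the annihilation of the off-diagonal blocks by mean-zero vectors proves (i) and (ii), and your computation for (iii) — top block $n_2(n_1+n_2)\vv_1$, bottom block $-n_1(n_1+n_2)\ez_1$ — checks out exactly.

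One point deserves more careful wording. You justify $J_{n_2\times n_1}\vv_i = 0$ for $i\geq 2$ by saying eigenvectors of a symmetric matrix belonging to \emph{distinct} eigenvalues are orthogonal. That argument only covers the case $\alpha_i>0$. If $G_1$ is disconnected then $\alpha_i=0$ for some $i\geq 2$, and a kernel eigenvector (e.g.\ the indicator vector of one connected component) need not sum to zero; for such a choice $\begin{bmatrix}\vv_i^T & \zeros_{n_2}^T\end{bmatrix}^T$ genuinely fails to be an eigenvector of $\lap$, since the bottom block produces a nonzero multiple of the all-ones vector. The repair is the standard one-liner: by the spectral theorem choose the $\vv_i$ to be an orthogonal eigenbasis with $\vv_1=\ones$, so that $\langle \vv_i,\ones\rangle=0$ for all $i\geq 2$ irrespective of multiplicities; the theorem statement implicitly presumes such a choice, and similarly part (iii) presumes $\vv_1,\ez_1$ are the all-ones vectors, which you do take. (In the paper's applications, Theorems~\ref{thm:suc-new}--\ref{thm:one}, the graphs involved are connected with simple spectrum, so the issue never arises there.) With that sentence fixed your proof is complete, and your closing observation — that the listed vectors together with the all-ones vector of $G$ form a full orthogonal eigenbasis of $\lap$ — is also correct under the same convention.
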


The following theorem describes how a strongly uncontrollable graph can be constructed from a lower order strongly uncontrollable graph while preserving its automorphism group.  Henceforth, we denote by $S_n$ the symmetric group on $\{1,2,\ldots,n\}$, that is, the group of all permutations on $\{1,2,\ldots,n\}$.
\begin{theorem}\label{thm:suc-new}
Let $G$ be a strongly uncontrollable graph on $n$ vertices.  Let $\widetilde{G}$ be the graph on $n+1$ vertices obtained from $G$ by adding a vertex and connecting it to all the vertices of $G$.  Then $\widetilde{G}$ is a strongly uncontrollable graph if and only if the spectral radius of $L $ is less than $n$.  In this case, $\aut(G)$ and $\aut(\widetilde{G})$ are equal when viewed as subgroups of the symmetric group $S_{n+1}$.
\end{theorem}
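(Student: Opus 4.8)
The plan is to recognise that $\wgph$ is precisely the join $G\vee K_1$ of $G$ with a single-vertex graph, and to read off the entire Laplacian spectrum of $\wgph$ from Theorem~\ref{thm:join}. Writing the simple spectrum of $\lap=\lap(G)$ as $0=\alpha_1<\alpha_2<\cdots<\alpha_n$ with mutually orthogonal eigenvectors $\vv_1=\ones,\vv_2,\ldots,\vv_n$, and taking $n_1=n$, $n_2=1$, parts (i) and (iii) of Theorem~\ref{thm:join} (together with the always-present simple zero eigenvalue of the connected graph $\wgph$) show that $\wlap$ has eigenvalue $0$ with eigenvector $\ones$, eigenvalues $1+\alpha_i$ for $i=2,\ldots,n$ with eigenvectors $(\vv_i^T,0)^T$, and eigenvalue $n+1$ with eigenvector $(\ones^T,-n)^T$. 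Since $\lap$ is positive semidefinite, its spectral radius is $\alpha_n$.

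First I would dispose of the simple-spectrum requirement, which is the only part of the definition of strong uncontrollability that is not automatic. The listed eigenvalues of $\wlap$ are pairwise distinct precisely when no $1+\alpha_i$ collides with $n+1$, i.e.\ when $\alpha_i\neq n$ for every $i$; the other potential collisions are excluded because $1+\alpha_i\geq 1>0$ and the $\alpha_i$ are distinct. Invoking the classical bound that the largest Laplacian eigenvalue of an $n$-vertex graph never exceeds $n$, so that $\alpha_n\leq n$, I conclude that $\wlap$ has simple spectrum if and only if $\alpha_n<n$, that is, if and only if the spectral radius of $\lap$ is strictly less than $n$. This already yields the ``only if'' half of the equivalence, since strong uncontrollability of $\wgph$ demands, by definition, that $\wlap$ have simple spectrum.

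Next I would verify that, whenever $G$ is strongly uncontrollable, the pair $(\wlap,b)$ is uncontrollable for every $b\in\{0,1\}^{n+1}$, independently of the spectral-radius condition. Writing $b=(b'^T,b_{n+1})^T$, strong uncontrollability of $G$ and the PBH test furnish an eigenvector of $\lap$ orthogonal to $b'$, necessarily a multiple of some $\vv_i$ by simplicity of the spectrum. If $i\geq 2$ then $(\vv_i^T,0)^T$ is an eigenvector of $\wlap$ orthogonal to $b$, so PBH gives uncontrollability; if $i=1$ then $\dotprod{\ones,b'}=0$ forces the binary vector $b'$ to be zero, whence $b'$ is also orthogonal to $\vv_2$ and we reduce to the previous case. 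Combining this with the simple-spectrum analysis, and with the fact that $\wgph$ is connected because the new vertex is universal, completes the ``if'' direction and hence the equivalence.

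For the automorphism claim the inclusion $\aut(G)\subseteq\aut(\wgph)$ is immediate: extending any $\sigma\in\aut(G)$ by fixing the apex vertex $w=n+1$ preserves both the edges of $G$ and the edges joining $w$ to every other vertex. The reverse inclusion is where the spectral-radius hypothesis does real work, and this is the step I expect to be the main obstacle. The idea is to show that $w$ is the \emph{unique} vertex of degree $n$ in $\wgph$: every other vertex $u$ has $\deg_{\wgph}(u)=\deg_G(u)+1$, which equals $n$ exactly when $u$ is universal in $G$; but a universal vertex would isolate a vertex in the complement $\bar G$, making $\bar G$ disconnected and forcing $n$ to be a Laplacian eigenvalue of $G$, i.e.\ $\alpha_n=n$, contrary to the standing hypothesis. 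Since automorphisms preserve degree, every $\tau\in\aut(\wgph)$ must fix $w$ and therefore restricts to a permutation of $\{1,\ldots,n\}$ preserving exactly the edges of $G$; thus $\tau\in\aut(G)$, giving $\aut(\wgph)\subseteq\aut(G)$ as subgroups of $S_{n+1}$ and completing the proof.
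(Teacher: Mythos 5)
Your proposal is correct and follows essentially the same route as the paper's proof: both read the spectrum of $\wlap$ off Theorem~\ref{thm:join} with $n_2=1$, conclude simplicity of the spectrum exactly when $\lambda_n<n$, extend an eigenvector $\vv_i$ of $\lap$ orthogonal to the truncation of $b$ by a zero entry to defeat the PBH test, and pin down $\aut(\wgph)$ via the uniqueness of the degree-$n$ vertex. Your treatment is in fact slightly more careful in two spots the paper glosses over --- the case where the only eigenvector orthogonal to $b'$ is $\ones$ (forcing $b'=0$), and the justification that no vertex of $G$ is universal (you route this through the disconnected complement, where the paper directly uses that a degree-$(n-1)$ vertex would make $n$ an eigenvalue) --- but these are refinements of the same argument, not a different one.
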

\begin{proof}
%
Let $\vv_1,\vv_2,\ldots,\vv_n$ be the eigenvectors of $\lap=L(G) $ with corresponding eigenvalues $0=\lambda_1<\lambda_2<\cdots<\lambda_n$.  Applying Theorem~\ref{thm:join} to $G$ and the graph with one vertex, the set $\{e,\begin{bmatrix}\vv_2^T & 0\end{bmatrix}^T,\ldots,\begin{bmatrix}\vv_n^T & 0\end{bmatrix}^T,\begin{bmatrix}e^T & -n\end{bmatrix}^T\}$ consists of mutually orthogonal eigenvectors of $\widetilde{\lap}=\lap(\widetilde{G})$ with eigenvalues $0=\lambda_1<\lambda_2+1<\cdots<\lambda_n+1\leq n+1$.  If $\lambda_n<n$ then $n+1$ is a simple eigenvalue of $\widetilde{\lap}$.  Consequently, the eigenvalues of $\widetilde{\lap}$ are distinct.

Now, if $\bv\in\{0,1\}^{n+1}$ then since $G$ is strongly uncontrollable there exists an eigenvector $\vv_i\neq \vv_1$ of $L$ such that $\begin{bmatrix}\vv_i^T & 0\end{bmatrix}^T$ is orthogonal to $b$.  This proves that $(\widetilde{\lap},\bv)$ is uncontrollable for every $\bv\in\{0,1\}^{n+1}$.

To prove the second claim, since the spectral radius of $\lap$ is less than $n$, no vertex of $G$ has degree $n-1$.  Hence, the vertex $v_{n+1}$ that was added to $G$ to form $\widetilde{G}$ is the only vertex of $\widetilde{G}$ with degree $n$.  It follows that any automorphism of $\widetilde{G}$ must fix $v_{n+1}$, and this proves that $\aut(G)$ and $\aut(\widetilde{G})$ are equal when viewed as subgroups of $S_{n+1}$.

To prove the converse statement, assume that $\widetilde{G}$ is strongly uncontrollable.  Then by definition $\wlap$ has simple spectrum.  Then since $n+1$ is an eigenvalue of $\wlap$ it follows that $\lambda_n<n$.  
\end{proof}
Theorem~\ref{thm:suc-new} shows that if $G$ is a twin graph on an even number of vertices then $\widetilde{G}$ is not a twin graph, and thus showing that the property of being a twin graph is not necessary for strong uncontrollability.

\begin{example}
Let $s(n)$ be the number of strongly uncontrollable graphs on $n$ vertices.   We have numerically verified that $s(8) = 10$ and $s(9)=12$.  All 10 strongly uncontrollable graphs for $n=8$ have spectral radius less than $n$ and are all twin graphs. Hence, each strongly uncontrollable graph on $n=8$ vertices induces a strongly uncontrollable graph on $n=9$ vertices via Theorem~\ref{thm:suc-new}, none of which is a twin graph.  One such pair is displayed in Figure~\ref{fig:suc-pair}.  The other 2 uncontrollable graphs on $n=9$ vertices are twin graphs.

\begin{figure}[h]
\centering
\begin{pspicture}(5,4)
\psset{xunit=1cm,yunit=1cm}
\rput[c](0,2){
\psdots[dotsize=0.18](0.14,1.8037109)
\psdots[dotsize=0.18](1.16,1.8037109)
\psline[linewidth=0.04cm](0.14,1.8037109)(1.16,1.8037109)
\psdots[dotsize=0.18](0.64,0.8837109)
\psdots[dotsize=0.18](1.14,-1.1962891)
\psdots[dotsize=0.18](0.14,-1.1962891)
\psdots[dotsize=0.18](0.64,-0.31628907)
\psdots[dotsize=0.18](1.24,0.28371093)
\psdots[dotsize=0.18](0.06,0.28371093)
\psline[linewidth=0.04cm](0.14,1.783711)(0.66,0.86371094)
\psline[linewidth=0.04cm](1.16,1.783711)(0.64,0.8837109)
\psline[linewidth=0.04cm](0.04,0.28371093)(0.64,-0.31628907)
\psline[linewidth=0.04cm](1.24,0.26371095)(0.64,-0.31628907)
\psline[linewidth=0.04cm](0.66,0.86371094)(1.24,0.28371093)
\psline[linewidth=0.04cm](0.06,0.28371093)(0.66,0.8837109)
\psline[linewidth=0.04cm](0.64,-0.33628905)(0.14,-1.1962891)
\psline[linewidth=0.04cm](0.64,-0.33628905)(1.14,-1.1962891)
\rput(0.58145505,-1.711289){$n=8$}
}
\rput[c](2,2){
\psdots[dotsize=0.18](2.14,1.8037109)
\psdots[dotsize=0.18](3.16,1.8037109)
\psline[linewidth=0.04cm](2.14,1.8037109)(3.16,1.8037109)
\psdots[dotsize=0.18](2.64,0.8837109)
\psdots[dotsize=0.18](3.14,-1.1962891)
\psdots[dotsize=0.18](2.14,-1.1962891)
\psdots[dotsize=0.18](2.64,-0.31628907)
\psdots[dotsize=0.18](3.24,0.28371093)
\psdots[dotsize=0.18](2.06,0.28371093)
\psline[linewidth=0.04cm](2.14,1.783711)(2.66,0.86371094)
\psline[linewidth=0.04cm](3.16,1.783711)(2.64,0.8837109)
\psline[linewidth=0.04cm](2.04,0.28371093)(2.64,-0.31628907)
\psline[linewidth=0.04cm](3.24,0.26371095)(2.64,-0.31628907)
\psline[linewidth=0.04cm](2.66,0.86371094)(3.24,0.28371093)
\psline[linewidth=0.04cm](2.06,0.28371093)(2.66,0.8837109)
\psline[linewidth=0.04cm](2.64,-0.33628905)(2.14,-1.1962891)
\psline[linewidth=0.04cm](2.64,-0.33628905)(3.14,-1.1962891)
\psdots[dotsize=0.18](2.66,0.28371093)
\psline[linewidth=0.04cm](2.66,0.86371094)(2.66,0.28371093)
\psline[linewidth=0.04cm](2.66,-0.31628907)(2.66,0.26371095)
\psline[linewidth=0.04cm](2.06,0.28371093)(3.26,0.28371093)
\psline[linewidth=0.04cm](3.16,1.8037109)(2.68,0.28371093)
\psline[linewidth=0.04cm](2.14,1.783711)(2.66,0.28371093)
\psline[linewidth=0.04cm](2.66,0.26371095)(2.14,-1.1962891)
\psline[linewidth=0.04cm](2.66,0.28371093)(3.14,-1.1962891)
\rput(2.6414552,-1.731289){$n=9$}
}
\end{pspicture} 
\caption{A strongly uncontrollable graph on $n=8$ vertices and its induced strongly uncontrollable graph on $n=9$ vertices using Theorem~\ref{thm:suc-new}.}
\label{fig:suc-pair}
\end{figure}
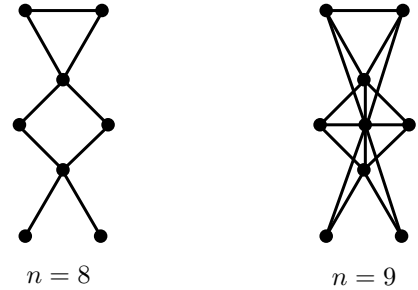
\end{example}
We now consider the case of adding two vertices.  If $\tau:\{1,2,\ldots,n\}\rightarrow\{1,2,\ldots,n\}$ is a permutation such that $\tau(i) = j$ and $\tau(j)=i$ (with $i\neq j$) and fixes all other integers (i.e., $\tau$ is a transposition), we denote $\tau$ by $\tau=(i\;j)$.
\begin{theorem}\label{thm:suc-new2}
Let $G$ be a strongly uncontrollable graph with vertex set $V =\{v_1,v_2,\ldots,v_n\}$.  Let $G_2=(\{v_{n+1},v_{n+2}\},\emptyset)$ be the empty graph on two vertices and let $\widetilde{G}=G\vee G_2$.  Then $\widetilde{G}$ is a strongly uncontrollable graph if and only if $n-2$ and $n$ are not eigenvalues of $L $.  In this case, $\aut(\widetilde{G})$ is generated by the union of a generating set of $\aut(G)$ and the transposition $\tau=(n+1\;\; n+2)$.
\end{theorem}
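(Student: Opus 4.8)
The plan is to mirror the structure of the proof of Theorem~\ref{thm:suc-new} but using all three parts of the join theorem (Theorem~\ref{thm:join}), since we are joining $G$ with the two-vertex empty graph $G_2$ rather than a single vertex. First I would fix the eigenpairs. Let $\vv_1=\ones,\vv_2,\ldots,\vv_n$ be eigenvectors of $\lap=L(G)$ with eigenvalues $0=\lambda_1<\lambda_2<\cdots<\lambda_n$ (distinct by strong uncontrollability). The graph $G_2$ has $n_2=2$ vertices with Laplacian eigenvalues $\beta_1=0,\beta_2=0$ and eigenvectors $\ez_1=(1,1)^T$, $\ez_2=(1,-1)^T$. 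Applying Theorem~\ref{thm:join}(i) gives the $n-1$ eigenvectors $[\vv_i^T\;\;0\;\;0]^T$ of $\wlap=L(\widetilde{G})$ with eigenvalues $\lambda_i+2$ for $i=2,\ldots,n$; part (ii) gives the single eigenvector $[\zeros_n^T\;\;1\;\;-1]^T$ with eigenvalue $n$ (since $\beta_2=0$ and $n_1=n$); and part (iii) gives $[2\ones^T\;\;-n\;\;-n]^T$ with eigenvalue $n+2$. Together with $\ones\in\real^{n+2}$ for eigenvalue $0$ this exhausts all $n+2$ eigenvectors.

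Next I would establish the spectral (distinctness) condition, which is where the two-sided hypothesis ``$n-2$ and $n$ are not eigenvalues of $L$'' enters. The eigenvalues of $\wlap$ are $0$, the shifted values $\{\lambda_i+2: i=2,\ldots,n\}$, the value $n$, and the value $n+2$. Distinctness within the shifted family is automatic from the $\lambda_i$ being distinct. A collision $\lambda_i+2=n$ is equivalent to $n-2$ being an eigenvalue of $L$, and a collision $\lambda_i+2=n+2$ is equivalent to $n$ being an eigenvalue of $L$; the value $n$ versus $n+2$ never collide. Hence $\wlap$ has simple spectrum \emph{if and only if} neither $n-2$ nor $n$ is an eigenvalue of $L$. (One should note $\lambda_i+2\geq 2>0$ and $n\geq 0$, so the zero eigenvalue is never duplicated for the connected graph $\widetilde{G}$.)

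For uncontrollability I would argue exactly as before: given any $\bv\in\{0,1\}^{n+2}$, write its first $n$ coordinates as $\bv'\in\{0,1\}^n$. Since $G$ is strongly uncontrollable, there is an eigenvector $\vv_i\neq\ones$ of $L$ with $\langle\vv_i,\bv'\rangle=0$; then the $\wlap$-eigenvector $[\vv_i^T\;\;0\;\;0]^T$ is orthogonal to $\bv$ regardless of the last two entries of $\bv$. By the PBH test $(\wlap,\bv)$ is uncontrollable, so $\widetilde{G}$ is strongly uncontrollable whenever the spectrum is simple. The converse is immediate: if $\widetilde{G}$ is strongly uncontrollable then $\wlap$ has simple spectrum, and since $n$ and $n+2$ are eigenvalues of $\wlap$ the distinctness analysis above forces $n-2,n\notin\spec(L)$.

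Finally, for the automorphism claim I would observe that $v_{n+1},v_{n+2}$ are the two vertices of $\widetilde{G}$ of degree $n+1$ (they are joined to every other vertex), and that they are twins in $\widetilde{G}$, so the transposition $\tau=(n+1\;\;n+2)$ is an automorphism. The main obstacle is showing every automorphism of $\widetilde{G}$ either fixes $\{v_{n+1},v_{n+2}\}$ setwise and restricts to an automorphism of $G$, or is such a restriction composed with $\tau$; this requires ruling out that some original vertex of $G$ also attains degree $n+1$. An original vertex $v$ has $\widetilde{G}$-degree $\deg_G(v)+2$, which equals $n+1$ iff $\deg_G(v)=n-1$, i.e.\ iff $n$ is an eigenvalue of $L$ (a vertex adjacent to all others gives Laplacian eigenvalue $n$, as in Lemma~\ref{lem:max-twins}(i)); the hypothesis $n\notin\spec(L)$ excludes this. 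Hence the degree-$(n+1)$ set is exactly $\{v_{n+1},v_{n+2}\}$, every automorphism preserves it, and the restriction to $V(G)$ (after possibly applying $\tau$) is an automorphism of $G$. This yields that $\aut(\widetilde{G})$ is generated by a generating set of $\aut(G)$ together with $\tau$.
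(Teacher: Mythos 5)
Your proof is correct and follows essentially the same route as the paper's: the same eigenvectors of $\wlap$ (you obtain them via Theorem~\ref{thm:join}, the paper verifies them from the explicit block form of $\wlap$), the same collision analysis giving simple spectrum iff $n-2,n\notin\spec(L)$, and the same PBH argument with the padded eigenvectors $[\vv_i^T\;\;0\;\;0]^T$. Your automorphism argument is in fact slightly more complete than the paper's, which merely asserts that no automorphism fails to preserve $\{v_{n+1},v_{n+2}\}$; your degree count, using $n\notin\spec(L)$ to rule out a vertex of $G$ of degree $n-1$, supplies the justification.
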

\begin{proof}
Let $\vv_1,\vv_2,\ldots,\vv_n$ be the eigenvectors of $\lap=L(G) $ with corresponding eigenvalues $0=\lambda_1<\lambda_2<\cdots<\lambda_n$.    The Laplacian matrix of $\widetilde{G}$ can be written as
\[
\wlap = \begin{bmatrix}(\lap+2I) & -\ones & -\ones \\[2ex] -\ones^T & n & 0\\[2ex] -\ones^T & 0 & n\end{bmatrix}.
\]
The vectors $\ones, \begin{bmatrix}\vv_2^T & 0\end{bmatrix}^T,\ldots, \begin{bmatrix}\vv_n^T & 0\end{bmatrix}^T$ are eigenvectors of $\wlap$ with eigenvalues $0=\lambda_1<\lambda_2+2<\cdots<\lambda_n+2$, and $\begin{bmatrix}2\ones^T & -n&-n\end{bmatrix}^T$ is an eigenvector of $\wlap$ with eigenvalue $n+2$.  The Faria eigenvector $\begin{bmatrix}\zeros^T & 1 & -1\end{bmatrix}^T$ of $\wlap$ has corresponding eigenvalue $n$.  Thus, if $n-2$ and $n$ are not eigenvalues of $\lap$ then $\wlap$ has simple spectrum.  Since $G$ is a strongly uncontrollable graph, every binary vector in $\{0,1\}^{n+2}$ is orthogonal to some eigenvector $\begin{bmatrix}\vv_i^T & 0 & 0\end{bmatrix}^T$ of $\wlap$.  Thus, $\wgph$ is also a strongly uncontrollable graph.  As in Theorem~\ref{thm:suc-new}, the converse statement is straightforward.

To prove the second statement, it is clear that every automorphism of $G$ can be extended to an automorphism of $\wgph$ by asking that it fix the vertices $v_{n+1}$ and $v_{n+2}$.  The transposition $\tau=(n+1\;\; n+2)$ is an automorphism of $\wgph$ and there are no other automorphisms of $\wgph$ that do not fix $v_{n+1}$ and $v_{n+2}$.  
\end{proof}

\begin{remark}
The procedure in Theorems~\ref{thm:suc-new} and \ref{thm:suc-new2} of taking a strongly uncontrollable graph $G$ and creating a new strongly uncontrollable graph by joining it to an empty graph with $n_2$ vertices cannot be extended to the case $n_2>2$.  Indeed, if $n_2>2$ vertices are joined to $G$ then $\wgph$ will contain an automorphism of order greater than two, and thus $\wgph$ would not have distinct eigenvalues.
\end{remark}

The following theorem can be seen as a complement of Theorem~\ref{thm:suc-new}.
\begin{theorem}\label{thm:one}
Let $G$ be a strongly uncontrollable graph with vertex set $V =\{v_1,v_2,\ldots,v_n\}$ and suppose that $\deg(v_n)=n-1$.  Let $\widetilde{G}$ be the graph on $n+1$ vertices obtained from $G$ by adding a vertex and connecting it only to $v_n$.  Then $\widetilde{G}$ is a strongly uncontrollable graph if and only if $1$ is not an eigenvalue of $L $.  Moreover, $\aut(G)$ and $\aut(\widetilde{G})$ are equal when viewed as subgroups of $S_{n+1}$.
\end{theorem}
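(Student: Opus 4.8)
The plan is to exploit the hypothesis $\deg(v_n)=n-1$, which makes $v_n$ a universal vertex, so that $G=G'\vee\{v_n\}$ where $G'=G-v_n$ is the subgraph induced on $\{v_1,\ldots,v_{n-1}\}$. Adding a pendant $v_{n+1}$ at $v_n$ is then the same as joining the disjoint union $G'\sqcup\{v_{n+1}\}$ (with $v_{n+1}$ isolated) to $v_n$, i.e.\ $\widetilde{G}=(G'\sqcup\{v_{n+1}\})\vee\{v_n\}$. First I would apply Theorem~\ref{thm:join} to $G=G'\vee\{v_n\}$: writing $0=\mu_1<\mu_2\le\cdots\le\mu_{n-1}$ for the Laplacian spectrum of $G'$, this yields $\spec(\lap)=\{0\}\cup\{1+\mu_i : i=2,\ldots,n-1\}\cup\{n\}$, with the eigenvectors $\left[\begin{smallmatrix} y_i\\0\end{smallmatrix}\right]$ (zero in the $v_n$ slot) for the middle eigenvalues, where $y_i$ is a $\mu_i$-eigenvector of $\lap(G')$, and $\left[\begin{smallmatrix} e\\-(n-1)\end{smallmatrix}\right]$ for the eigenvalue $n$.

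Applying Theorem~\ref{thm:join} a second time to $\widetilde{G}=(G'\sqcup\{v_{n+1}\})\vee\{v_n\}$ is the key computation. Since $\lap(G'\sqcup\{v_{n+1}\})=\lap(G')\oplus[0]$, the isolated vertex contributes an extra eigenvalue $0$ whose eigenvector, taken orthogonal to the all-ones vector, is $e-n\,e_{v_{n+1}}$. Carrying the decomposition through the join gives $\spec(\wlap)=\{0,1\}\cup\{1+\mu_i : i=2,\ldots,n-1\}\cup\{n+1\}$: concretely $\wlap$ has the eigenvector $\phi$ equal to $1$ on $v_1,\ldots,v_{n-1}$, $0$ on $v_n$, and $-(n-1)$ on $v_{n+1}$ (eigenvalue $1$), the eigenvectors $\eta_i$ equal to $y_i$ on $G'$ and $0$ on $v_n,v_{n+1}$ (eigenvalues $1+\mu_i$), and one eigenvector at eigenvalue $n+1$. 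Because $G$ is strongly uncontrollable, $\lap$ has simple spectrum, so $0,1+\mu_2,\ldots,1+\mu_{n-1},n$ are distinct and each $1+\mu_i\le n<n+1$; passing to $\wlap$ merely inserts the value $1$ and replaces $n$ by $n+1$, so the sole possible repetition is between $\phi$ and an $\eta_i$ at the value $1$, which occurs exactly when some $\mu_i=0$, i.e.\ when $G'$ is disconnected. Since $1\in\spec(\lap)$ iff some $\mu_i=0$ (as $1\ne 0$ and $1\ne n$), I conclude $\wlap$ has simple spectrum $\iff G'$ is connected $\iff 1\notin\spec(\lap)$. This also gives the converse implication of the theorem: if $\widetilde{G}$ is strongly uncontrollable then $\wlap$ is simple, forcing $1\notin\spec(\lap)$.

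For the forward uncontrollability direction I assume $1\notin\spec(\lap)$, so $G'$ is connected and every $y_i$ is orthogonal to $e\in\real^{n-1}$. Given $b=(b',b_n,b_{n+1})\in\{0,1\}^{n+1}$, I would try to kill $b$ with some $\eta_i$, noting $\langle\eta_i,b\rangle=\langle y_i,b'\rangle$. The main obstacle is the case $\langle y_i,b'\rangle\neq 0$ for \emph{every} $i$, where no $\eta_i$ works directly. Here I would invoke the strong uncontrollability of $G$ applied to the truncation $\tilde b=(b',b_n)\in\{0,1\}^n$: some eigenvector of $\lap$ is orthogonal to $\tilde b$, but it cannot be any $\left[\begin{smallmatrix} y_i\\0\end{smallmatrix}\right]$ (these give $\langle y_i,b'\rangle\ne0$) nor the all-ones vector $e\in\real^n$ (orthogonality to which forces $b'=0$, contradicting the case), so it must be $\left[\begin{smallmatrix} e\\-(n-1)\end{smallmatrix}\right]$. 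Orthogonality then reads $\|b'\|_1=(n-1)b_n$, which forces $b'=0$ when $b_n=0$ and $b'=e$ when $b_n=1$; in either case $\langle y_i,b'\rangle=0$, a contradiction. Hence some $\eta_i$ is orthogonal to $b$, so $(\wlap,b)$ is uncontrollable for every $b$ and $\widetilde{G}$ is strongly uncontrollable.

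Finally, for the automorphism claim I would use that under $1\notin\spec(\lap)$ the vertex $v_{n+1}$ is the unique degree-one vertex of $\widetilde{G}$: connectivity of $G'$ means no $v_i$ with $i<n$ is isolated in $G'$, so $\deg_{\widetilde{G}}(v_i)=\deg_G(v_i)\ge 2$, while $\deg_{\widetilde{G}}(v_n)=n$. Every automorphism of $\widetilde{G}$ therefore fixes $v_{n+1}$ and restricts to an automorphism of the induced subgraph $\widetilde{G}-v_{n+1}=G$. Conversely, two universal vertices would be degree-$(n-1)$ twins, impossible in a graph with simple Laplacian spectrum by Lemma~\ref{lem:max-twins}(i), so $v_n$ is the unique universal vertex of $G$ and is fixed by every element of $\aut(G)$; each such automorphism thus extends to $\widetilde{G}$ by fixing $v_{n+1}$, preserving the edge $\{v_n,v_{n+1}\}$. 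These assignments are mutually inverse, giving $\aut(G)=\aut(\widetilde{G})$ as subgroups of $S_{n+1}$.
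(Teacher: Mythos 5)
Your proof is correct, but it takes a genuinely different route from the paper's. The paper never decomposes $G$ as a join in this proof: it works directly with the block form $\wlap=\bigl[\begin{smallmatrix}\lap+\ev_n\ev_n^T & -\ev_n\\ -\ev_n^T & 1\end{smallmatrix}\bigr]$, uses $\vv_n=-\ones+n\ev_n$ to deduce $\ev_n^T\vv_j=0$ for $2\le j\le n-1$, lifts those $\vv_j$ by appending a zero, and exhibits the eigenvectors for eigenvalues $n+1$ and $1$ by hand (its $(-1,\ldots,-1,0,n-1)$ is the negative of your $\phi$). You instead peel off the universal vertex, writing $G=G'\vee\{v_n\}$ and $\widetilde{G}=(G'\sqcup\{v_{n+1}\})\vee\{v_n\}$, and apply Theorem~\ref{thm:join} twice. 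Both routes produce the same eigenbasis, but yours buys a cleaner structural explanation: the new eigenvalue $1$ is visibly the lift of the extra kernel vector created by the isolated vertex, and you get the characterization $1\in\spec(\lap)\iff G-v_n$ disconnected, which the paper never states. You also supply in full the two parts the paper waves off as ``similar to Theorem~\ref{thm:suc-new}'': the PBH case analysis and the automorphism claim. Here your care pays off, since the situation is \emph{not} identical to Theorem~\ref{thm:suc-new} --- the eigenvector $-\ones+n\ev_n$ of $\lap$ does not lift to an eigenvector of $\wlap$, so the cases where the only eigenvector of $\lap$ orthogonal to the truncated input is $\ones$ or $-\ones+n\ev_n$ (forcing $b'=0$ or $b'=\ones$) genuinely need the handling you give them; your automorphism argument via the unique degree-one vertex $v_{n+1}$ and the unique universal vertex $v_n$ (ruled out from duplication by Lemma~\ref{lem:max-twins}(i)) is likewise sound. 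One cosmetic slip: writing $0=\mu_1<\mu_2\le\cdots\le\mu_{n-1}$ presupposes $G'$ connected; it should be $\mu_1=0\le\mu_2$, since the case $\mu_2=0$ is exactly the degeneracy you analyze later --- your subsequent argument treats it correctly, so nothing breaks.
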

\begin{proof}
Let $\vv_1,\vv_2,\ldots,\vv_n$ be eigenvectors of $\lap=L(G) $ with corresponding eigenvalues $0=\lambda_1<\lambda_2<\cdots<\lambda_n=n$.  Since $\deg(v_n)=n-1$ we may take $\vv_n=-e + ne_n.$ 
By orthogonality of eigenvectors of $L$, we have $0=\langle\vv_j,\vv_n\rangle=n\langle \vv_j, e_n\rangle$ for $2\leq j\leq n-1$, that is, $\dotprod{x_j, e_n}=e_n^Tx_j=0$.  Now, $\wlap=\lap(\widetilde{G})$ takes the form
\[
\wlap=\begin{bmatrix}\lap + \ev_n\ev_n^T & -\ev_n \\[2ex] -\ev_n^T & 1\end{bmatrix}
\]
and therefore if we set $\tilde{\vv}_j=\begin{bmatrix}\vv_j^T & 0\end{bmatrix}^T$, for $2\leq j\leq n-1$, we have
\[
\wlap\tilde{\vv}_j = \begin{bmatrix} \lap\vv_j + \ev_n\ev_n^T\vv_j\\[2ex] -\ev_n^T\vv_j\end{bmatrix} = \begin{bmatrix} \lap\vv_j\\[2ex] 0\end{bmatrix} = \lambda_j\tilde{\vv}_j.
\]
Hence, $\tilde{\vv}_j$ is an eigenvector of $\wlap$ with eigenvalue $\lambda_j$, for $2\leq j\leq n-1$.  Now, $\tilde{\vv}_{n+1}=\wlap\ev_{n}=\begin{bmatrix}-1&-1&\cdots -1&n&-1\end{bmatrix}^T\in\real^{n+1}$ is an eigenvector of $\wlap$ with eigenvalue $\lambda_{n+1}=n+1$.  Finally, consider the vector $\tilde{\vv}_n = \begin{bmatrix}-1&-1&\cdots&-1&0&n-1\end{bmatrix}^T \in \real^{n+1}$.  A straightforward calculation shows that $\tilde{\vv}_n$ is an eigenvector of $\wlap$ with eigenvalue $1$.  Hence, if $\lambda_j\neq 1$ for $2\leq j\leq n-1$, then $\wlap$ has simple eigenvalues $\{0,1,\lambda_2,\lambda_3,\ldots,\lambda_{n-1},n+1\}$.  The rest of the proof is similar to that of Theorem~\ref{thm:suc-new} and is omitted.
\end{proof}

\section{Conclusion}

In this paper, we have characterized network topologies under which the Laplacian consensus dynamics are uncontrollable for any subset of the nodes chosen as control inputs and that emit a common control signal.  In these network topologies, the lack of controllability is not due to repeated eigenvalues of the Laplacian matrix, but instead is characterized by structural properties of the network, namely, the existence of a maximal number of twin nodes and certain almost equitable partitions. We provided a sufficient condition for a network to contain this strong uncontrollability property and described network perturbations that leave the uncontrollability property invariant.  We also related our work with the minimal controllability problem and showed how these network topologies require the control of essentially half of the nodes for any chance of controllability.


%

\appendices
\section{Proof of Lemma~\ref{lem:pi-regular}}\label{app:lemma-regular}
Before we give the proof of Lemma~\ref{lem:pi-regular} we need some preliminary results. Let $\pi=\{C_1,C_2,\ldots,C_k\}$ be a partition of $V=\{v_1,\ldots,v_n\}$ and let $K=\diag(|C_1|,|C_2|,\ldots,|C_k|)\in\real^{k\times k}$.  It is easy to see that $K=P^T_\pi P_\pi$.    Let $\rho=\{S_1,S_2,\ldots,S_{m}\}$ be a partition of $\pi$ and let $\xi_j\in\{0,1\}^k$ be the characteristic vector of $S_j$.  If $\rho$ is $\pi$-regular (i.e., all cells in $S_j$ have the same cardinality) then clearly 
\[
K\xi_j = |C_i| \xi_j
\]
for any (and hence all) $C_i \in S_j$.  It follows then that $KP_\rho = P_\rho\tilde{K}$ where $\tilde{K}=\diag(|C_{1,1}|,|C_{2,1}|,\ldots,|C_{m,1}|)$ where $C_{j,1}\in S_j$ for $j=1,2,\ldots,m$.  An identical argument shows that $K^{-1}P_\rho = P_\rho \tilde{K}^{-1}$.  We can now prove Lemma~\ref{lem:pi-regular}.

\begin{proof}[Proof of Lemma~\ref{lem:pi-regular}]
Let $\pi=\{C_1,C_2,\ldots,C_k\}$ and let $K = P_\pi^TP_\pi=\textup{diag}(|C_1|, |C_2|, \ldots, |C_k|)$ as above.  Then from Theorem~\ref{thm:equi-lap}, we have that $L_\pi = K^{-1} P_\pi^T L P_\pi$.  Also, from Theorem~\ref{thm:equi-lap} applied to the quotient graph $G_\pi$ and the partition $\rho$, we have that $L_\pi P_\rho = P_\rho L_{\pi_\rho}$ where $\pi_\rho$ is the $\rho$-merge of $\pi$.  Then
\begin{align*}
L_\pi^T P_\rho &= (P_\pi^T L^T P_\pi K^{-1}) P_\rho \\
&= (P_\pi^T L P_\pi)( K^{-1} P_\rho)\\
&= (K L_\pi) (K^{-1}P_\rho) \\
&= (K L_\pi)( P_\rho \tilde{K}^{-1} )\\
&= K P_\rho L_{\pi_\rho} \tilde{K}^{-1}\\
&= P_\rho (\tilde{K} L_{\pi_\rho} \tilde{K}^{-1}).
\end{align*}
In other words, $P_\rho^T L_\pi = (\tilde{K} L_{\pi_\rho} \tilde{K}^{-1})^T P_\rho^T$, and thus if $P_\rho^T x=0$ then clearly $P_\rho^TL_\pi x =0$, i.e., $\ker(P_\rho^T)$ is $L_\pi$-invariant.
\end{proof}


\section*{Acknowledgment}
The author wishes to thank the anonymous referees for their comments and suggestions that improved the presentation of the paper.  The author acknowledges the support of the National Science Foundation under Grant No. ECCS-1700578.

\ifCLASSOPTIONcaptionsoff
  \newpage
\fi



\bibliographystyle{IEEEtran}
\bibliography{master_bib}
\end{document}